\newtheorem{theorem}{\textbf{Theorem}}[section]
\newtheorem{lemma}{\textbf{Lemma}}[section]
\newtheorem{proposition}{\textbf{Proposition}}[section]
\newtheorem{corollary}{\textbf{Corollary}}[section]
\newtheorem{remark}{\textbf{Remark}}[section]
\newtheorem{definition}{\textbf{Definition}}[section]
\def\be{\begin{equation}}
\def\ee{\end{equation}}
\def\bea{\begin{eqnarray}}
\def\eea{\end{eqnarray}}
\def\bt{\begin{theorem}}
\def\et{\end{theorem}}
\def\bl{\begin{lemma}}
\def\el{\end{lemma}}
\def\br{\begin{remark}}
\def\er{\end{remark}}
\def\bp{\begin{proposition}}
\def\ep{\end{proposition}}
\def\bc{\begin{corollary}}
\def\ec{\end{corollary}}
\def\bd{\begin{definition}}
\def\ed{\end{definition}}
\def\non{\nonumber }
\def\tth{\tilde{\theta}}
\def\tch{\tilde{\chi}}
\def\tq{\mathbf{q}}
\def\ve{\varepsilon}
\def\txi{\tilde{\xi}}
\begin{document}

\title{Non-isothermal viscous Cahn--Hilliard equation with inertial term and dynamic boundary conditions}

\author{
{\sc Cecilia Cavaterra}\footnote{Dipartimento di Matematica, Universit\`{a} degli
Studi di Milano, Milano 20133, Italy, {\it cecilia.cavaterra@unimi.it}},
\ {\sc Maurizio Grasselli}\footnote{Dipartimento di Matematica, Politecnico di Milano, Milano 20133, Italy,
{\it maurizio.grasselli@polimi.it}}, \
{\sc Hao Wu}\footnote{School of Mathematical Sciences and Shanghai Key Laboratory for Contemporary Applied
Mathematics, Fudan University, Shanghai 200433, P.R. China,  \textit{haowufd@yahoo.com}}
}

\date{\today}

\maketitle


\begin{abstract}\noindent
We consider a non-isothermal modified Cahn--Hilliard equation which was previously analyzed by M. Grasselli et al.
Such an equation is characterized by an inertial term and a viscous term and it is coupled with a hyperbolic heat equation.
The resulting system was studied in the case of no-flux boundary conditions.
Here we analyze the case in which the order parameter is subject to a dynamic boundary condition.
This assumption requires a more refined strategy to extend the previous results to the present case.
More precisely, we first prove the well-posedness for solutions with bounded energy as well as for weak solutions.
Then we establish the existence of a global attractor. Finally, we prove the convergence of any given weak solution to a single equilibrium
by using a suitable \L ojasiewicz--Simon inequality.

\medskip\noindent
\textbf{Keywords:} Viscous Cahn--Hilliard equation, inertial term, Cattaneo's law, existence and uniqueness, dissipative estimates,
global attractors, convergence to equilibrium.

\medskip\noindent
\textbf{MRS 2010:} 35B40, 35B41, 37L99, 80A22.
\end{abstract}

\section{Introduction}

 \setcounter{equation}{0}

The Cahn--Hilliard equation is a cornerstone in Materials Science since it gives a fairly good description of phase
separation processes in binary alloys (see, e.g., \cite{CMZ,NC2,NC3} and references therein).
The early stage of such a phenomenon is called spinodal decomposition.
A modification of the Cahn--Hilliard equation has been proposed in \cite{Galen} to account for rapid spinodal decomposition in
certain materials (see also \cite{GJ1,GJ2}). This modified equation reads as follows
$$
\ve \chi_{tt}+\chi_t-\Delta\mu =0,
$$
where $\ve>0$ is a relaxation time, $\chi$ represents the (relative) concentration of one component and $\mu$ is the
so-called chemical potential given by
$$
\mu=-\Delta\chi+\alpha \chi_t+f(\chi).
$$
Here $\alpha > 0$ is a viscosity parameter accounting for possible presence of microforces (see \cite{NC1}) and $f$ is the
derivative of a given double-well potential. We recall that the classical Cahn--Hilliard equation corresponds to the case
$\ve=\alpha=0$. The case $\epsilon>0$ and $\alpha=0$ is a very challenging equation (see \cite{GSZ09,GSSZ09,GSZ10,Se}, cf. also
\cite{BGM2,GGMP051D, ZM1,ZM2} for the 1D case) which becomes much nicer in presence of viscosity (cf. \cite{Bo,BGM1,BGM3,GGMP05,K})
In particular, in the latter case, solutions regularize in finite time. Moreover, when $(\ve,\alpha)$ tends to zero and $\alpha$
dominates $\epsilon$, then the modified viscous Cahn--Hilliard equation (MVCH) is very close to the standard one in a rigorous way (see \cite{BGM1,BGM3,GGMP05}).
A non-isothermal version of MVCH equation has been proposed and analyzed in \cite{GPS2} (cf. also \cite[9.1.5]{BS}), namely,
 \bea
 &&(\theta+\chi)_t+\nabla \cdot \tq=0,\qquad \text{in \, }\Omega\times (0,\infty),\label{1}\\
 && \sigma\tq_t+\tq=-\nabla \theta,\qquad\qquad \text{in \, }\Omega\times (0,\infty),\label{2}\\
 &&\ve \chi_{tt}+\chi_t-\Delta\mu=0,\ \qquad \text{in \, }\Omega\times (0,\infty),\label{3}\\
 && \mu=-\Delta\chi+\alpha \chi_t+f(\chi)-\theta, \qquad \text{in \, }\Omega\times (0,\infty),\label{4}
 \eea
where $\theta$ represents the (relative) temperature, $\tq$ is the heat flux which is given by the Maxwell--Cattaneo's law \eqref{3}, $\sigma>0$ is a further relaxation time and $\Omega \subset \mathbb{R}^d$ ($d=2,3$) is a bounded domain with a smooth boundary $\Gamma$.

System \eqref{1}--\eqref{4} has been endowed in \cite{GPS2} with no-flux boundary conditions. Here we want to
consider the same system subject to the following boundary conditions
 \bea
 && \tq\cdot \nu =\partial_\nu\mu=0,\qquad \text{on \, }\Gamma\times (0,\infty),\label{5}\\
 &&  \chi_t-\Delta_\Gamma \chi+\partial_\nu \chi+g(\chi)=0, \qquad \text{on \, } \Gamma\times (0,\infty),\label{6}
 \eea
where $\nu$ stands for the outward normal unit vector on the boundary and $\Delta_\Gamma$ stands for the Laplace--Beltrami
operator. The system is also subject to the initial conditions
 \be
 \theta(0)=\theta_0, \quad \tq(0)=\tq_0,\quad
 \chi(0)=\chi_0,\quad \chi_t(0)=\chi_1,\quad \text{in \, }  \Omega.\label{7}
 \ee
We recall that dynamic boundary conditions like \eqref{6} have been proposed by physicists to take into account possible
interactions between the binary alloy and the container walls (see, e.g., \cite{FMD1,FMD2,K-etal}). From the mathematical viewpoint, the
Cahn--Hilliard equation with dynamic boundary conditions has been analyzed in a number of papers (cf., e.g.,
\cite{CFJ06,GaW08,GMS09,GMS,MZ05,MZ10,RZ03,WH07,WZ04}, see also \cite{Ga07,Ga08,GM1,GM2} for the non-isothermal case).
However, the MVCH equation with dynamic boundary conditions
has only been considered in the isothermal case. In \cite{CGG} the authors studied a slightly more general equation
with memory which reduces to the MVCH equation if the kernel is a decreasing exponential. They proved well-posedness, regularity,
and the existence of global and exponential attractors. More recently, the construction of a family of exponential attractors which is robust
with respect to the relaxation time (say $\ve$) has been established in \cite{GG13}.

Here we want to extend the results proven in \cite{GPS2}, namely, well-posedness, existence of the global attractor and convergence
to a single equilibrium. More precisely, we first establish the existence and the uniqueness of global (bounded) energy and weak solutions.
We recall that bounded energy solutions are more general than weak solutions (cf. Definition \ref{energy-weak} below). In addition, in the present case a regularizing effect for $\chi$ is missing due to the presence of the dynamic boundary condition \eqref{6}. This entails that the equation \eqref{3}
must be understood in a more generalized way with respect to \cite{GPS2} (see Remark \ref{regularity} below). In this case the application of the
\L ojasiewicz--Simon technique is also more complicated than in \cite{GPS2} and it seems necessary to work with weak solutions (cf. \eqref{lsa} and
\eqref{LSA} below).

The plan of the paper goes as follows. In the next section the main assumptions as well as the notions of energy and weak solutions are introduced.
In Section \ref{sec3} some a priori energy and higher-order uniform estimates are obtained.
Then, existence and uniqueness of energy and weak solutions are proven. Section \ref{sec4} is devoted to establish the existence of the
global attractor for the semigroup acting on the energy phase space. Finally, in Section \ref{sec5} the convergence of a weak solution to a single
equilibrium is analyzed. Among the open issues it is worth mentioning the existence of a family of exponential attractors and its robustness with respect to $\sigma,$ $\ve$ and $\alpha$ (see \cite{GGMP05} for the isothermal case).

\section{Preliminaries}
\label{sec2}
\setcounter{equation}{0}

Due to the presence of the Laplace-Beltrami operator $\Delta_\Gamma$, in order to deal with system \eqref{1}-\eqref{7},
it is convenient to introduce the unknown function $\xi:=\chi|_\Gamma$ defined on the
boundary $\Gamma$. Setting $\xi_0:=\chi_0|_\Gamma$, we can rewrite the original system as
\bea
 &&(\theta+\chi)_t+\nabla \cdot \tq=0,\qquad \text{in \, } \Omega \times (0,\infty), \label{c1}\\
 && \tq_t+\tq=-\nabla \theta,\qquad \text{in \, } \Omega \times (0,\infty), \label{c2}\\
 &&  \chi_{tt}+\chi_t-\Delta\mu=0, \qquad \text{in \, } \Omega \times (0,\infty), \label{c3}\\
 && \mu=-\Delta\chi+\alpha \chi_t+f(\chi)-\theta, \qquad \text{in \, } \Omega \times (0,\infty), \label{c4}\\
 && \tq\cdot \nu =\partial_\nu\mu=0,\qquad \text{on \, } \Gamma\times (0,\infty),\label{c5}\\
 &&  \xi_t-\Delta_\Gamma \xi +\partial_\nu \chi+g(\xi)=0, \quad \text{on \, } \Gamma\times (0,\infty),\label{c6}\\
 &&\theta(0)=\theta_0, \quad \tq(0)=\tq_0,\quad
 \chi(0)=\chi_0, \quad \xi(0)=\xi_0,\quad \chi_t(0)=\chi_1,\quad \text{in \, } \Omega.\hskip2truecm \label{c7}
 \eea
For the sake of simplicity, here and in the remaining part of the paper we assume $\ve =\sigma= 1$.
Besides, we will consider only the viscous case $\alpha >0$ even though existence of an energy solution can be
proven also in the case $\alpha=0$.

\textbf{Notations and functional spaces.}
We denote by $|\Omega|$ the Lebesgue measure of $\Omega$ and by $|\Gamma|$ the $n-1$-dimensional measure of $\Gamma$.
For a given real Banach space $X$, its norm is indicated by $\|\cdot\|_{X}$.
The symbol $(\cdot, \cdot)_{X,X^*}$ stands for a duality pairing between the Banach space $X$ and its dual $X^*$.
We denote by $L^p(\Omega)$ and $L^p(\Gamma)$ $(p\geq 1)$ the standard Lebesgue spaces with respective norms $\|\cdot\|_{L^p(\Omega)}$
and $\|\cdot\|_{L^p(\Gamma)}$.
For $s>0$, $H^s(\Omega)$ and $H^s(\Gamma)$  stand for
the Sobolev spaces normed by $\|\cdot\|_{H^s(\Omega)}$ and $\|\cdot\|_{H^s(\Gamma)}$.
Bold letters are used to denote the corresponding vector spaces, for instance,
$\mathbf{L}^2(\Omega)=(L^2(\Omega))^d$, $\mathbf{H}^1(\Omega)=(H^1(\Omega))^d$.

For the sake of brevity, the norm in $L^2(\Omega)$ and $\mathbf{L}^2(\Omega)$ will be simply indicated by $\|\cdot\|$
and the inner products in $L^2(\Omega)$ and $L^2(\Gamma)$ will be denoted by
$(\cdot, \cdot)$ and $(\cdot, \cdot)_{L^2(\Gamma)}$, respectively.

Besides, we set
\bea
&& H = L^2(\Omega),\quad  H_\Gamma=L^2(\Gamma),\quad V = H^1(\Omega), \quad V_\Gamma=H^1(\Gamma),\non\\
&& \mathbf{V}_0=\{\mathbf{v}\in \textbf{H}^1(\Omega) \ : \ \mathbf{v}\cdot\nu|_\Gamma=0\},\non\\
&& H_0=\left\{v\in H: \langle v\rangle := |\Omega|^{-1}\int_\Omega v dx=0\right\}, \non
\eea
and we introduce the Hilbert space $\mathbf{L}^2_{{\rm div}}(\Omega)$ and its inner product
$$ \mathbf{L}^2_{{\rm div}}(\Omega) =\{\mathbf{q}\in \mathbf{L}^2(\Omega): \nabla \cdot \mathbf{q}\in L^2(\Omega)\},
\, \, \, (\tq_1, \tq_1)_{\mathbf{L}^2_{{\rm div}}(\Omega)}=(\tq_1, \tq_2)_{\mathbf{L}^2(\Omega)}+(\nabla \cdot \tq_1, \nabla \cdot \tq_2)_{L^2(\Omega)}.
$$
It is well known that if $\tq\in \mathbf{L}^2_{{\rm div}}(\Omega)$ then $\tq\cdot \nu\in H^{-\frac12}(\Gamma)$ (cf. \cite{Monk}).
Hence we introduce the following closed subspace of $\mathbf{L}^2_{{\rm div}}(\Omega)$
$$
 \mathbf{W}_0=\{\tq\in \mathbf{L}^2_{{\rm div}}(\Omega): \tq\cdot \nu|_\Gamma=0\}.
$$
We have ${\mathbf W}_0\hookrightarrow \mathbf{L}^2(\Omega)\hookrightarrow ({\mathbf W}_0)^*$ with dense and continuous
embeddings.

The Laplace operator with Neumann boundary condition and its domain are denoted by
$$A=-\Delta :D(A)\subset H \to H_0, \quad D(A) = \{v \in H^2(\Omega):\partial _\nu v=0 \ \text{on}\ \Gamma\},$$
and we indicate with $A_0$ its restriction to $H_0$. Note that $A_0$ is a positive linear operator.  Hence, for any
$r\in\mathbb{ R}$, we can define its powers $A^r_0$ and their domains $D(A^ \frac{r}{2}_ 0 )$, setting
$$V^ r_ 0 = D(A^ \frac{r}{2}_ 0 ), \ \text{with inner product} \ \ (v_1,v_2)_{V_0^r}=( A_0^\frac{r}{2}v_1,A_0^\frac{r}{2}v_2).$$
Taking any $u\in V^*$ with  $\langle u\rangle=0$, then $v=A_0^{-1} u$ is a solution to the
generalized Neumann problem for $A$ with source $u$ and the restriction $\langle v\rangle=0$.
Hence, for any $u, w\in V^*$ with $\langle u\rangle=\langle w\rangle=0$, we have
$$ (u, A_0^{-1} w)_{V^*,V}= (w, A_0^{-1} u)_{V^*, V}=\int_\Omega (\nabla  A_0^{-1} u)\cdot (\nabla  A_0^{-1} w) dx. $$
We endow $V^*$ with the equivalent norm
$\|v\|^2_{V^*}=\|\nabla A_0^{-1}(v-\langle v\rangle)\|^2+ |\langle v\rangle|^2$, for any $v\in V^*$.
Moreover, if $u\in H^1(0,T; V^*)$ with $\langle u\rangle=0$, then
$$ (u_t, A^{-1}_0 u)_{V^*,V}=\frac12\frac{d}{dt}\|u\|_{V^*}^2, \quad \text{a.e.}\ t\in (0, T).$$

Next, we introduce the product spaces
$$\mathbb{H}=H \times H_\Gamma, \quad H^r(\Omega)\times H^r(\Gamma)$$
and the subspaces of $H^r(\Omega)\times H^r(\Gamma)$
$$ \mathbb{H}^r:=\{ (\chi, \xi) \in H^r(\Omega)\times H^r(\Gamma) \ : \ \xi = \chi|_\Gamma \}, \quad \forall \, r>\frac12,$$
with the induced graph norm. We note that $h=(u,v) \in \mathbb{H}$ will be thought as a pair of functions
belonging, respectively, to $H$ and to $H_\Gamma$. If we do not have additional regularity, the second component of $h$ (i.e., $v$) is not necessary to be
the trace of the first one (i.e. $u$). The elements of $\mathbb{H}^r$ will be considered as pairs of functions $(\chi, \chi|_\Gamma)$ such that $\mathbb{H}^r$
is identified with a (closed) subspace of the product space $H^r(\Omega)\times H^r(\Gamma)$.
For $r_1>r_2>\frac12$, the dense and compact embeddings $\mathbb{H}^{r_1}\hookrightarrow \mathbb{H}^{r_2}$ hold.
Finally, we introduce the closed subspaces of $\mathbb{H}$ and $\mathbb{H}^r$ as follows
$$ \mathbb{H}_0=\{(u, v)\in \mathbb{H}: \ \langle u \rangle=0\}, \quad \mathbb{H}^r_0=\{(u, v)\in \mathbb{H}^r: \ \langle u \rangle=0\},\quad\forall \, r >\frac12.$$
According to the structure of system \eqref{c1}--\eqref{c7}, we define the product spaces
$$
\mathbb{X}= H\times \mathbf{H} \times \mathbb{H}^1\times V^*,\qquad
\mathbb{Y}= V\times \mathbf{V}_0\times \mathbb{H}^3 \times V,\non
$$
endowed with the following norms
\bea
\|(z_1,\mathbf{z}_2,z_3, z_4, z_5)\|^2_{\mathbb{X}}
&=&\|z_1\|^2+
\|\mathbf{z}_2\|^2+\|(z_3, z_4)\|_{\mathbb{H}^1}^2+\|z_5\|_{V^*}^2,\\
\|(z_1,\mathbf{z}_2,z_3,z_4,z_5)\|^2_{\mathbb{Y}}&=&
\|z_1\|_V^2+
\|\mathbf{z}_2\|_{\mathbf{H}^1(\Omega)}^2+\|(z_3, z_4)\|_{\mathbb{H}^3}^2+\|z_5\|_V^2.\non
\eea
It is easy to see that the continuous embedding $\mathbb{Y}\hookrightarrow\mathbb{X}$ holds.

\textbf{Assumptions on the nonlinearities.}
Let us now list our assumptions on $f$ and $g$.
 \begin{itemize}
 \item[(H1)] $ f, \, g\in C^2(\mathbb{R})$,
 \item[(H2)] Dissipative condition: $\displaystyle\liminf_{|s|\to+\infty} f'(s)>0, \; \displaystyle\liminf_{|s|\to+\infty} g'(s)>0,$
 \item[(H3)] Growth condition:
 $$|f''(y)|\leq c_f(1+|y|^p),\quad |g''(y)|\leq c_g(1+|y|^q),\quad  \forall \, y\in \mathbb{R},$$
 for some generic positive constants $c_f, c_g$ independent of $y$, with $q\in [0, +\infty)$ and $p\in[0,1]$ when $n=3$,
 while $p\in [0, +\infty)$ for $n=2$.
\end{itemize}
\br \label{AH}
Consider the potential functions $F(y)=\int_0^y f(s)ds$ and $G(y)=\int_0^y g(s)ds$,  $y\in \mathbb{R}$.
It is easy to check that assumptions (H2)--(H3) yield the following properties (cf. e.g., \cite{GM1}):
 \\
(1) there exist $c_0,c_1>0$ such that
$$ f'(y)\geq -c_0, \quad F(y)\geq -c_1, \quad \forall \, y\in \mathbb{R},$$
(2) for any $M_0 \in \mathbb{R}$, there exist $c_2, c_3>0$ and
a sufficiently large $c_4>0$ such that $$ (y-M_0)f(y) \geq  c_2 (y-M_0)^2+c_3F(y)-c_4,\quad
\forall\, y\in \mathbb{R},$$
(3) $\forall\, \epsilon>0$, there exists $c_\epsilon>0$ sufficiently large such that $$|f(y)|\leq
 \epsilon F(y)+c_\epsilon, \ \forall\, y\in \mathbb{R}.$$
\er
Similar results hold also for the potential $G(y)$.
\br
One can verify, for instance, that the classical double well potential $F(y)=\frac14(y^2-1)^2$ and the corresponding function $f(y)=y^3-y$
satisfy (H1)--(H3) while $g$ can be any polynomial of odd degree with a positive leading coefficient.
 \er

We are ready to introduce the variational formulation of problem \eqref{c1}--\eqref{c7}.

\begin{definition}
\label{energy-weak}
Let $T\in (0, +\infty)$. The set of functions
 $(\theta, \mathbf{q}, \chi, \xi, \chi_t)$ satisfying
\bea
 && (\theta, \mathbf{q}, \chi, \xi, \chi_t)\in L^\infty(0,T; \mathbb{X}),  \label{r1}\\
 && \theta_t\in L^\infty(0, T; V^*),\quad  \tq_t\in L^2(0,T; (\mathbf{V}_0)^*),\label{r2}\\
 &&  \chi_t\in L^2(0, T; V^*), \quad \alpha^\frac12 \chi_t\in L^2 (0,T; H),
 \quad \xi_t\in L^2(0, T; H_\Gamma),\label{r3} \\
 &&  \chi_{tt}+\chi_t\in L^\infty(0, T; D(A_0^{-\frac32})),\label{r4}
\eea
is an \emph{energy solution} to problem \eqref{c1}--\eqref{c5} with initial datum $(\theta_0, \mathbf{q}_0, \chi_0, \xi_0, \chi_1)\in \mathbb{X}$,
if the following identities hold, for a.e. $t\in (0, T)$,
 \bea
 && ((\theta+\chi)_t, w)_{V^*, V}-(\mathbf{q}, \nabla w)=0, \label{ee1}\\
 && (\mathbf{q}_t+\mathbf{q}, \mathbf{v})_{\mathbf{V}_0^*, \mathbf{V}_0}-(\theta, \nabla \cdot \mathbf{v})=0,\label{ee2}\\
 && (A_0^{-1}(\chi_{tt}+\chi_t), \phi)_{V^*,V}+(\mu, \phi)_{V^*,V}=0,\label{we3} \\
 && (\mu_, \phi)_{V^*,V}=(\nabla \chi, \nabla \phi)+(\nabla_\Gamma \xi, \nabla_\Gamma v)_{\mathbf{L}^2(\Gamma)}  +\alpha (\chi_t, \phi)+(\xi_t, v)_{L^2(\Gamma)} \non\\
 && \quad \quad \quad \quad +(f(\chi),\phi)+(g(\xi), v)_{L^2(\Gamma)} -(\theta, \phi),  \label{ew4}
 \eea
for any $w\in V$, $\mathbf{v}\in \mathbf{V}_0$ and $(\phi,v)\in \mathbb{H}^1$ with $v=\phi|_{\Gamma}$.

If, in addition, $(\theta_0, \mathbf{q}_0, \chi_0, \xi_0, \chi_1)\in \mathbb{Y}$ and
\bea
 && (\theta, \mathbf{q}, \chi, \xi, \chi_t)\in L^\infty(0,T; \mathbb{Y}),  \label{r1a}\\
 && \theta_t\in L^\infty(0, T; H),\quad  \tq_t\in L^2(0,T; \mathbf{L}^2(\Omega)),\label{r2a}\\
 &&  \chi_{tt}\in L^2(0, T; V^*), \quad \alpha^\frac12 \chi_{tt}\in L^2 (0,T; H),
 \quad \xi_{tt}\in L^2(0, T; H_\Gamma),\label{r3a} \\
 &&  \chi_{tt}+\chi_t\in L^\infty(0, T; D(A_0^{-\frac12})),\label{r4a}
\eea
then $(\theta, \mathbf{q}, \chi, \xi, \chi_t)$ is a \emph{weak solution} to problem \eqref{c1}--\eqref{c5}.
\end{definition}
\begin{remark}
 We note that, due to the regularities \eqref{r1}--\eqref{r4}, an energy solution belongs to the class
 $ C_w([0,T]; \mathbb{X})$, where the space $C_w([0,T]; X)$ ($X$ being a real Banach space) is defined as
 $$C_w([0,T]; X):=\{ v\in L^\infty(0,T; X): (\phi, v(\cdot))_{X^*, X}\in C^0([0,T]), \ \forall\, \phi\in X^*\}.$$
 Therefore, any energy solution can be evaluated point-wisely in time and initial conditions have a well-defined meaning.
 The same property holds for weak solutions.
 \end{remark}

 \begin{remark}
 \label{regularity}
  Note that in case of homogeneous Neumann boundary conditions we can recover the additional regularity $\chi \in L^2(0,T;H^2(\Omega))$ for
  an energy solution (see \cite[(2.18)]{GPS2}). Thus equation \eqref{we3} can be written in the standard weak form (see \cite[(2.6)]{GPS2}).
  However, in the present case, it seems that this regularity does not hold. On the other hand, such a property is crucial to prove that solutions
  to the isothermal MVCH regularize in finite time (see \cite{Bo}, cf. also \cite{GPS2} for the non-isothermal case with Fourier heat conduction).
  We also point out that the present notion of weak solution is a quasi-strong solution in the terminology introduced in \cite{GSSZ09}.
 \end{remark}

\section{Well-posedness}\setcounter{equation}{0}
\label{sec3}
\subsection{\emph{A priori} estimates}

\textbf{Conserved quantities.}
Integrating \eqref{c1} and \eqref{c3} over $\Omega$, we deduce from the no-flux boundary condition \eqref{c5} that the following
relations hold
\bea
 && \int_\Omega (\theta(t)+\chi(t))dx=\int_\Omega
(\theta_0+\chi_0)dx,\quad \forall\, t\geq 0,\label{cv1}\\
&& \int_\Omega (\chi_t(t)+\chi(t))dx=\int_\Omega (\chi_1+\chi_0)dx, \quad \forall\, t\geq 0.\label{cv2}
 \eea
 The second relation \eqref{cv2} is an ODE for $\langle
 \chi(t) \rangle$, then we have
 \be
 \langle \chi(t)\rangle=\langle \chi_0\rangle+\langle \chi_1\rangle
 -e^{-t}\langle \chi_1\rangle, \quad \langle \chi_t(t)\rangle= e^{-t}\langle \chi_1\rangle.\label{cv3}
 \ee
 It is easy to see that if $\langle \chi_1\rangle=0$, then the so-called mass conservation relation holds
 $$ \int_\Omega \chi(t)dx=\int_\Omega \chi_0dx.$$
 Based on the above observations, in order to obtain dissipative estimates of the solutions to problem \eqref{c1}--\eqref{c7},
 it is convenient to introduce the new variables
 \be
 \label{cv4}
 \tth=\theta-\langle \theta \rangle, \quad \tch
 =\chi-\langle \chi \rangle,\quad \txi=\tch|_\Gamma=\xi-\langle \chi \rangle,
 \ee
 which imply that
 \bea
 &\tch_t(t) =\chi_t(t)-\langle \chi_t (t)\rangle=\chi_t(t)- Q_1(t),\non\\
 &\tch_{tt}(t) = \chi_{tt}(t)-\langle \chi_{tt} (t) \rangle=\chi_{tt}(t)+ Q_1(t),\non
 \eea
 with the function $Q_1$ given by
 \be
 Q_1(t)= \langle \chi_1\rangle e^{-t}.\non
 \ee
Then, system \eqref{c1}--\eqref{c7} can be rewritten as
 \begin{align}
 &(\tth+\tch)_t+\nabla \cdot \tq=0, \quad \text{in \, } \Omega\times (0,\infty),\label{c1a}\\
 & \tq_t+\tq=-\nabla \tth, \quad \qquad \ \text{in \, } \Omega\times (0,\infty), \label{c2a}\\
 & \tch_{tt}+\tch_t-\Delta\tilde{\mu} =0, \quad\ \ \text{in \, } \Omega\times (0,\infty),\label{c3a}\\
 & \tilde{\mu}=-\Delta\tch+\alpha \tch_t+f(\chi)-\tth, \qquad \qquad \text{in \, } \Omega\times (0,\infty), \label{c4a}\\
 &  \txi_t-\Delta_\Gamma \txi+g(\xi)+\partial_\nu \tch+Q_1(t)=0, \quad \text{on \, } \Gamma\times (0,\infty), \label{c6a}\\
 & \tq\cdot \nu =\partial_\nu\tilde{\mu}=0, \quad \text{on \, } \Gamma\times (0,\infty),\label{c5a}\\
 &\tth(0)=\theta_0 - \langle \theta_0 \rangle, \quad \tq(0)=\tq_0,\\
 & \tch(0)=\chi_0 - \langle \chi_0 \rangle, \quad \text{in}\ \Omega,
\quad  \txi(0)=\xi_0 - \langle \chi_0 \rangle,\quad  \tch_t(0)=\chi_1-\langle \chi_1 \rangle,\quad \text{in}\ \Omega.\hskip2truecm \label{c7a}
  \end{align}

  \noindent \textbf{Dissipative estimates.} In what follows, we will derive some uniform estimates on the solutions of problem \eqref{c1}--\eqref{c7}
  which are necessary for studying the well-posedness and long-time behavior of the system.
  The following calculations have a formal character but they can be justified by working within a proper Faedo--Galerkin approximation scheme
  (see \cite{GMS09} and \cite{GMS}).

\bl[Dissipative estimate in $\mathbb{X}$] \label{es}
Let the assumptions (H1)--(H3) be satisfied. Suppose $(\theta(t), \mathbf{q}(t), \chi(t), \xi(t), \chi_t(t))$ is a regular solution of
system \eqref{c1}--\eqref{c7}. Then there exists a positive nondecreasing function $\mathcal{Q}$ such that
\bea
&& \|(\theta(t), \mathbf{q}(t), \chi(t), \xi(t), \chi_t(t))\|_\mathbb{X}^2\non\\
&& \ \ +\int_t^{t+1} (\alpha \|\chi_t(\tau)\|^2
+ \|\xi_t(\tau)\|_{H_\Gamma}^2+ \|(\theta(\tau), \mathbf{q}(\tau), \chi(\tau), \xi(\tau), \chi_t(\tau))\|_\mathbb{X}^2)d\tau\non\\
& \leq& \mathcal{Q}(\|(\theta_0, \mathbf{q}_0, \chi_0, \xi_0, \chi_1)\|_\mathbb{X})e^{-\rho_1t}+\rho_2, \quad \forall\, t\geq 0,\label{disa1}
\eea
where the positive constants $\rho_1, \rho_2$ may depend on $\langle\theta_0\rangle $, $\langle\chi_0\rangle$, $\langle\chi_1\rangle$,
$|\Omega|$, $|\Gamma|$, but are independent of $t$.
In particular, the constants $\rho_1, \rho_2$ are independent of $\|(\theta_0, \mathbf{q}_0, \chi_0, \xi_0, \chi_1)\|_\mathbb{X}$.
\el
 \begin{proof}
Multiplying \eqref{c1a} and \eqref{c2a} by $\tth$ and $\tq$, respectively, and integrating over $\Omega$, we obtain
\bea
&&\frac12\frac{d}{dt}\|\tth\|^2-\int_\Omega \tq\cdot \nabla\tth dx =-\int_\Omega \tch_t\tth dx,\label{e1}\\
&&\frac{1}{2}\frac{d}{dt}\|\tq\|^2 +\|\tq\|^2+\int_\Omega \tq \cdot \nabla \tth dx =0.\label{e2}
\eea
Multiplying \eqref{c3a} by $A_0^{-1}\tch_t$ and integrating over $\Omega$, we have
\bea
&&\frac{d}{dt}\left[\frac{1}{2}\|A_0^{-\frac12}\tch_t\|^2 + \frac12\|\nabla \tch\|^2+\int_\Omega F(\chi)dx
+ \frac12\|\nabla_\Gamma \txi\|^2_{H_\Gamma}+ \int_\Gamma G(\xi)dS\right]\non\\
&& +\|A_0^{-\frac12}\tch_t\|^2+\alpha \|\tch_t\|^2+\|\txi_t\|^2_{H_\Gamma}\non\\
&=& \int_\Omega\tth\tch_tdx+Q_1\left(\int_\Omega f(\chi)dx+\int_\Gamma g(\xi)dS\right) - Q_1\int_\Gamma \txi_t dS.\label{e3}
\eea
In a similar manner, multiplying \eqref{c3a} by $ A_0^{-1}\tch$ and integrating over $\Omega$, we get
\bea
&& \frac{d}{dt}\left(\int_\Omega A_0^{-\frac12}\tch_t A_0^{-\frac12}\tch dx
+ \frac12\| A_0^{-\frac12}\tch\|^2+\frac{\alpha}{2}\|\tch\|^2+\frac{1}{2}\|\txi\|^2_{H_\Gamma}\right)\non\\
&& -\| A_0^{-\frac12}\tch_t\|^2+\|\nabla  \tch\|^2+\|\nabla_\Gamma\txi\|^2_{H_\Gamma} +\int_\Omega f(\chi)\tch dx+\int_\Gamma g(\xi)\txi dx \non\\
&=& \int_\Omega \tth\tch dx-Q_1\int_\Gamma \txi dS.\label{e4}
\eea
Besides, using the equation \eqref{c1a} and \eqref{c2a}, we deduce the identity
\bea
&& \frac{d}{dt}\int_\Omega \mathbf{q}\cdot \nabla A_0^{-1}\tth dx+\|\tth\|^2 \non\\
&=&\int_\Omega \mathbf{q}_t \cdot  \nabla A_0^{-1}\tth dx+ \int_\Omega \mathbf{q} \cdot   \nabla A_0^{-1}\tth_t dx+\|\tth\|^2\non\\
&=& -\int_\Omega \mathbf{q}\cdot \nabla A_0^{-1} \tth dx -\int_\Omega \mathbf{q}\cdot \nabla A_0^{-1}\tch_tdx
+\|A_0^{-\frac12}\nabla \cdot \mathbf{q}\|^2.\label{e5}
\eea
Multiplying \eqref{e4} and \eqref{e5} by some small constants $\kappa_1, \kappa_2>0$ (to be chosen later), respectively, and adding the resulting equations with \eqref{e1}--\eqref{e3}, one deduces that
\bea
&& \frac{d}{dt}\left(\frac12\|\tth\|^2+\frac{1}{2}\|\tq\|^2
+\frac{1}{2}\|A_0^{-\frac12}\tch_t\|^2+ \frac12\|\nabla\tch\|^2+\int_\Omega F(\chi)dx \right.\non\\
&&  +\frac12\|\nabla_\Gamma\txi\|_{H_\Gamma}^2+\frac{\kappa_1}{2}\|\txi\|_{H_\Gamma}^2
+ \int_\Gamma G(\xi)dS+\kappa_1\int_\Omega A_0^{-\frac12}\tch_t A_0^{-\frac12}\tch dx\non\\
&&\left. +\frac{\kappa_1}{2}\| A_0^{-\frac12}\tch\|^2+\frac{\kappa_1\alpha}{2}\|\tch\|^2
+ \kappa_2\int_\Omega \mathbf{q}\cdot \nabla A_0^{-1}\tth dx \right)\non\\
&&  +\|\tq\|^2+(1-\kappa_1)\|A_0^{-\frac12}\tch_t\|^2+\alpha \|\tch_t\|^2+\|\txi_t\|^2_{H_\Gamma} +\kappa_1\|\nabla \tch\|^2\non\\
&& +\kappa_1\|\nabla_\Gamma \txi\|^2_{H_\Gamma} +\kappa_1\left(\int_\Omega f(\chi)\tch dx+\int_\Gamma g(\xi)\txi dS\right)+\kappa_2\|\tth\|^2\non\\
& =& Q_1\left(\int_\Omega f(\chi) dx+\int_\Gamma g(\xi) dS\right)-Q_1\int_\Gamma \txi_t dS
+ \kappa_1\int_\Omega \tth\tch dx-\kappa_1 Q_1\int_\Gamma \txi dS\non\\
&& -\kappa_2\int_\Omega \mathbf{q}\cdot \nabla A_0^{-1} \tth dx -\kappa_2\int_\Omega \mathbf{q}\cdot \nabla A_0^{-1}\tch_tdx
+\kappa_2\|A_0^{-\frac12}\nabla \cdot \mathbf{q}\|^2.\non
\eea
From Remark \ref{AH}(2), taking $M_0=\langle \chi\rangle$, then we have
\bea
&&\int_\Omega f(\chi)\tch dx \geq K_1 \int_\Omega F(\chi)dx+K_2\|\tch\|^2-K_3,\non\\
&&\int_\Gamma g(\xi)\txi dS\geq K_1'\int_\Gamma G(\xi) dS+K_2'\|\txi\|_{H_\Gamma}-K_3',\non
\eea
where $K_i>0, K_i'>0$  $(i=1,2,3)$ are independent of $\chi, \xi$. Besides, from Remark \ref{AH}(3) it follows
\be
Q_1\left(\int_\Omega f(\chi) dx+\int_\Gamma g(\xi) dS\right)
\leq \frac{\kappa_1 K_1}{2} \int_\Omega F(\chi)dx + \frac{\kappa_1 K_1'}{2}\int_\Gamma G(\xi) dS+ K_4|Q_1|,\non
\ee
where $K_4$ depends on $K_1, K_1', \kappa_1$.
By the Poincar\'e inequality, there exists $C_P>0$ depending on $\Omega$ such that
\bea
&& -Q_1\int_\Gamma \txi_t dS + \kappa_1\int_\Omega \tth\tch dx-\kappa_1 Q_1\int_\Gamma \txi dS\non\\
&\leq&  |Q_1||\Gamma|^\frac12\|\txi_t\|_{H_\Gamma}+\kappa_1C_P\|\tth\|\|\nabla\tch\|+ \kappa_1|Q_1||\Gamma|^\frac12\|\txi\|_{H_\Gamma}\non\\
&\leq& \frac12\|\txi_t\|_{H_\Gamma}^2+\frac{\kappa_1K_2'}{2}\|\txi\|_{H_\Gamma}^2+\frac{\kappa_1}{2}\|\nabla \tch\|^2
+\frac{\kappa_1C_P^2}{2}\|\tth\|^2+K_5|Q_1|^2,\non
\eea
where $K_5$ depends on $\langle \chi_1\rangle, |\Gamma|$ and $\kappa_1$.
Next, there exists some $C_\Omega>0$ depending on $\Omega$ such that
\bea
&&-\kappa_2\int_\Omega \mathbf{q}\cdot \nabla A_0^{-1} \tth dx -\kappa_2\int_\Omega \mathbf{q}\cdot \nabla A_0^{-1}\tch_tdx
+\kappa_2\|A_0^{-\frac12}\nabla \cdot \mathbf{q}\|^2\non\\
&\leq& \kappa_2C_\Omega(\|\mathbf{q}\|\|\tth\|+ \|\mathbf{q}\|\|A_0^{-\frac12}\tch_t\|+\|\mathbf{q}\|^2)\non\\
&\leq& \frac{\kappa_2}{2}\|\tth\|^2+\frac{\kappa_2C_\Omega}{2} \|A_0^{-\frac12}\tch_t\|+\frac{\kappa_2}{2}C_\Omega(C_\Omega+3)\|\mathbf{q}\|^2.\label{kk}
\eea
We now choose $\kappa_1, \kappa_2>0$ sufficiently small so that
\be
\kappa_1\leq\frac14, \quad \kappa_2C_\Omega\leq \frac12,\quad
\kappa_1+\frac{\kappa_2C_\Omega}{2}\leq \frac12,\quad \frac{\kappa_2}{2}C_\Omega(C_\Omega+3)\leq \frac12,\quad
\frac{\kappa_1C_P^2}{2}\leq \frac{\kappa_2}{4}.\non
\ee
From the above estimates we deduce the following inequality
\be
\frac{d}{dt} \mathcal{Y}(t)+ \mathcal{I}(t)\leq K_6(1+e^{-2t}), \quad \forall \, t \geq 0\label{Y}
\ee
where
\bea
\mathcal{Y}&=& \frac12\|\tth\|^2+\frac{1}{2}\|\tq\|^2
+\frac{1}{2}\|A_0^{-\frac12}\tch_t\|^2+ \frac12\|\nabla \tch\|^2+\int_\Omega F(\chi)dx  \non\\
&&  +\frac12\|\nabla_\Gamma \txi\|_{H_\Gamma}^2+\frac{\kappa_1}{2}\|\txi\|_{H_\Gamma}^2+ \int_\Gamma G(\xi)dS
+\kappa_1\int_\Omega A_0^{-\frac12}\tch_t A_0^{-\frac12}\tch dx\non\\
&& +\frac{\kappa_1}{2}\| A_0^{-\frac12}\tch\|^2+\frac{\kappa_1\alpha}{2}\|\tch\|^2+ \kappa_2\int_\Omega \mathbf{q}\cdot \nabla A_0^{-1}\tth dx\label{Ya}
\eea
and
\bea
\mathcal{I}&=&\frac{\kappa_2}{4}\|\tth\|^2+\frac12\|\tq\|^2+\frac12\|A_0^{-\frac12}\tch_t\|^2+\alpha \|\tch_t\|^2+\frac12\|\txi_t\|^2_{H_\Gamma}
 +\frac{\kappa_1}{2}\|\nabla \tch\|^2\non\\
&& +\kappa_1\|\nabla_\Gamma \txi\|^2_{H_\Gamma}+\frac{\kappa_1K_2'}{2}\|\txi\|^2_{H_\Gamma}+\kappa_1K_1 \int_\Omega F(\chi)dx
+\kappa_1K_1'\int_\Gamma G(\xi) dS.\non
\eea
By comparison, it is easy to verify that
\be
\mathcal{Y}(t)\leq K_{7} \mathcal{I}(t), \quad \forall \, t \geq 0,\non
\ee
which, together with \eqref{Y}, implies
\be
\frac{d}{dt} \mathcal{Y}(t)+ K_{8}\mathcal{Y}(t)\leq K_6(1+e^{-t})\leq 2K_6,\quad \forall \, t \geq 0.\non
\ee
As a result,
\be
\mathcal{Y}(t)\leq \mathcal{Y}(0)e^{-K_{8}t}+\frac{2K_6}{K_{8}},\quad \forall\, t\geq 0.\label{Y1}
\ee
Since the quantities $\langle\theta\rangle$, $\langle\chi\rangle$ and $\langle\chi_t\rangle$ are uniformly bounded in time, we can deduce that
\be
\mathcal{Y} + K_{9}
 \geq  K_{10}\left(\|\tth\|^2+\|\tq\|^2+\|\nabla \tch\|^2+ \|\tch\|^2+\|\nabla_\Gamma \txi\|_{H_\Gamma}^2+\|\txi\|_{H_\Gamma}^2
+\|A_0^{-\frac12}\tch_t\|^2\right),\label{Y2}
\ee
where the constants $K_{9}, K_{10}>0$ may depend on $\Omega$, $\langle\theta_0\rangle$, $\langle\chi_0\rangle$ and $\langle\chi_1\rangle$.
Then, from \eqref{Y1} and \eqref{Y2} one infers estimate \eqref{disa1}. The proof is complete.
\end{proof}

\noindent \textbf{Higher-order estimates}. In what follows, we derive the uniform-in-time estimate in the higher-order space $\mathbb{Y}$.
For the sake of simplicity, from now on we shall indicate by $C$ or $C_i, \, i \in \mathbb{N}$, a positive constant that may vary from line to
line and also in the same line.
\bl
\label{esY}
Let the assumptions (H1)--(H3) be satisfied. Suppose $(\theta(t), \mathbf{q}(t), \chi(t), \xi(t), \chi_t(t))$ is a weak solution of
system \eqref{c1}--\eqref{c7} with initial data $(\theta_0, \tq_0, \chi_0, \xi_0, \chi_1) \in \mathbb{Y}$. Then we have
\be
\|(\theta(t), \mathbf{q}(t), \chi(t), \xi(t), \chi_t(t))\|_{\mathbb{Y}}\leq C(\|(\theta_0, \mathbf{q}_0, \chi_0, \xi_0, \chi_1)\|_{\mathbb{Y}}, \quad \forall\, t\geq 0. \label{eesY}
\ee
\el
\begin{proof}
 We (formally) differentiate \eqref{c1a}--\eqref{c5a} with respect to time and we get
 \begin{align}
 &(\tth+\tch)_{tt}+\nabla \cdot \tq_t=0, \quad \text{in \, } \Omega\times (0,\infty),\label{dc1a}\\
 & \tq_{tt}+\tq_t=-\nabla \tth_t, \quad \text{in \, } \Omega\times (0,\infty), \label{dc2a}\\
 & \tch_{ttt}+\tch_{tt}-\Delta\tilde{\mu}_t =0, \quad \text{in \, } \Omega\times (0,\infty),\label{dc3a}\\
 & \tilde{\mu}_t=-\Delta\tch_t+\alpha \tch_{tt}+f'(\chi)\chi_t-\tth_t,  \quad \text{in \, } \Omega\times (0,\infty), \label{dc4a}\\
 &  \txi_{tt}-\Delta_\Gamma \txi_t +g'(\xi)\xi_t+\partial_\nu \tch_t-Q_1(t)=0, \quad \text{on \, } \Gamma\times (0,\infty), \label{dc6a}\\
 & \tq_t\cdot \nu =\partial_\nu\tilde{\mu}_t=0, \quad \text{on \, } \Gamma\times (0,\infty),\label{dc5a}\\
 &\tth_t(0)= - \chi_1+ \langle \chi_1 \rangle-\nabla \tq_0, \quad \tq_t(0)=-\tq_0-\nabla \theta_0,,\quad \text{in}\ \Omega, \\
 & \tch_t(0)=\chi_1-\langle \chi_1 \rangle, \quad \tch_{tt}(0)=-\chi_1+\langle \chi_1 \rangle+\Delta(-\Delta \chi_0+\alpha \chi_1+f(\chi_0)-\theta_0)
 \\
 &  \txi_t(0)=\Delta_\Gamma \xi_0 - g(\xi_0)-\partial_\nu \chi_0 - \langle \chi_1 \rangle,\quad \text{in}\ \Omega.\hskip2truecm \label{dc7a}
  \end{align}
It is easy to verify that the initial datum can be controlled as follows
\be
\|(\tth_t(0), \tq_t(0), \tch_t(0), \txi_t(0), \tch_{tt}(0)\|_{\mathbb{X}}\leq C \|(\theta_0, \tq_0, \chi_0, \xi_0, \chi_1)\|_{\mathbb{Y}},
\ee
where $C$ is a constant depending on $\Omega$ and $\Gamma$.

Multiplying \eqref{dc1a} and \eqref{dc2a} by $\tth_t$ and $\tq_t$, respectively, and then integrating over $\Omega$, we obtain
\bea
&&\frac12\frac{d}{dt}\|\tth_t\|^2-\int_\Omega \tq_t\cdot \nabla\tth_t dx =-\int_\Omega \tch_{tt}\tth_t dx,\label{de1}\\
&&\frac{1}{2}\frac{d}{dt}\|\tq_t\|^2 +\|\tq_t\|^2+\int_\Omega \tq_t \cdot \nabla \tth_t dx =0,\label{de2}
\eea
Multiplying \eqref{dc3a} by $A_0^{-1}\tch_{tt}$ and integrating over $\Omega$, we have
\bea
&&\frac12\frac{d}{dt}\left[\|A_0^{-\frac12}\tch_{tt}\|^2 + \|\nabla \tch_t\|^2
+ \int_\Omega (f'(\chi)+L)\tch_t^2 dx+ \|\nabla_\Gamma \txi_t\|^2_{H_\Gamma}+\int_\Gamma (g'(\xi)+L)\txi_t^2 dS\right]\non\\
&& +\|A_0^{-\frac12}\tch_{tt}\|^2+\alpha \|\tch_{tt}\|^2+\|\txi_{tt}\|^2_{H_\Gamma}\non\\
&=& \int_\Omega\tth_t\tch_{tt}dx+\frac12 \int_\Omega f''(\chi)\chi_t\tch_t^2 dx -Q_1\int_\Omega f'(\chi) \tch_{tt} dx
+\frac12 \int_\Gamma g''(\xi)\xi_t\txi_t^2 dS\non\\
&&  -Q_1\int_\Gamma g'(\xi) \txi_{tt} dS +L\int_\Omega \tch_t\tch_{tt}+ L\int_\Gamma \txi_t\txi_{tt}dS + Q_1\int_\Gamma \txi_{tt} dS,\label{de3}
\eea
where we have used the identities
\bea
&& \int_\Omega f'(\chi)\chi_t \tch_{tt} dx=\frac12\frac{d}{dt} \int_\Omega f'(\chi)\tch_t^2 dx-\frac12 \int_\Omega f''(\chi)\chi_t\tch_t^2 dx
+Q_1\int_\Omega f'(\chi) \tch_{tt} dx,\non\\
&& \int_\Gamma g'(\xi)\xi_t \txi_{tt} dS=\frac12\frac{d}{dt} \int_\Gamma g'(\xi)\txi_t^2 dS-\frac12 \int_\Gamma g''(\xi)\xi_t\txi_t^2 dS
+Q_1\int_\Gamma g'(\xi) \txi_{tt} dS.\non
\eea
Here $L\geq c_0+1$ is a positive constant such that $f'(y)+L\geq 1$ and $g'(y)+L\geq 1$ (cf. Remark \ref{AH}).
On the other hand, multiplying \eqref{dc3a} by $ A_0^{-1}\tch_t$ and integrating over $\Omega$, we get
\bea
&& \frac{d}{dt}\left(\int_\Omega A_0^{-\frac12}\tch_{tt} A_0^{-\frac12}\tch_t dx
+ \frac12\| A_0^{-\frac12}\tch_t\|^2+\frac{\alpha}{2}\|\tch_t\|^2+\frac{1}{2}\|\txi_t\|^2_{H_\Gamma}\right)\non\\
&& -\| A_0^{-\frac12}\tch_{tt}\|^2+\|\nabla  \tch_t\|^2+\|\nabla_\Gamma\txi_t\|^2_{H_\Gamma}
+\int_\Omega (f'(\chi)+L)\tch_t^2dx+\int_\Gamma (g'(\xi)+L)\txi_t^2 dS\non\\
&=& -Q_1\int_\Omega f'(\chi) \tch_tdx-Q_1 \int_\Gamma g'(\xi) \txi_t dS +\int_\Omega \tth_t\tch_t dx+Q_1\int_\Gamma \txi_t dS\non\\
&&+ L(\|\tch_t\|^2+\|\txi_t\|_{H_\Gamma}^2).\label{de4}
\eea
Finally, using the equations \eqref{c1a} and \eqref{c2a} we obtain
\bea
&& \frac{d}{dt}\int_\Omega \mathbf{q}_t\cdot \nabla A_0^{-1}\tth_t dx+\|\tth_t\|^2 \non\\
&=&\int_\Omega \mathbf{q}_{tt} \cdot  \nabla A_0^{-1}\tth_t dx+ \int_\Omega \mathbf{q}_t \cdot   \nabla A_0^{-1}\tth_{tt} dx+\|\tth_t\|^2\non\\
&=& -\int_\Omega \mathbf{q}_t\cdot \nabla A_0^{-1} \tth_t dx -\int_\Omega \mathbf{q}_t\cdot \nabla A_0^{-1}\tch_{tt}dx
+\|A_0^{-\frac12}\nabla \cdot \mathbf{q}_t\|^2.\label{de5}
\eea
Multiplying \eqref{de4}, \eqref{de5} by some small constants $\kappa_3, \kappa_4>0$ (to be chosen later), respectively, and adding the resultants
with \eqref{de1}--\eqref{de3}, one deduces that
\be
\frac{d}{dt} \mathcal{Y}_1(t)+\mathcal{I}_1(t)\leq \mathcal{R}_1(t),\quad \forall \, t\geq 0,\label{dishigh}
\ee
where
\bea
\mathcal{Y}_1&=& \frac12\|\tth_t\|^2+\frac12\|\tq_t\|^2+\frac{1}{2}\|A_0^{-\frac12}\tch_{tt}\|^2 + \frac12\|\nabla \tch_t\|^2
+ \frac12\|\nabla_\Gamma \txi_t\|^2_{H_\Gamma}\non\\
&& + \frac12\int_\Omega (f'(\chi)+L)\tch_t^2 dx+ \frac12 \int_\Gamma (g'(\xi)+L)\txi_t^2 dS
-\kappa_3 \int_\Omega A_0^{-\frac12}\tch_{tt} A_0^{-\frac12}\tch_t dx\non\\
&&
+ \frac{\kappa_3}{2}\| A_0^{-\frac12}\tch_t\|^2+\frac{\kappa_3\alpha}{2}\|\tch_t\|^2+\frac{\kappa_3}{2}\|\txi_t\|^2_{H_\Gamma}\non\\
&& +\kappa_4\int_\Omega \mathbf{q}_t\cdot \nabla A_0^{-1}\tth_t dx,\label{y1}
\eea
\bea
\mathcal{I}_1&=& \|\tq_t\|^2+(1-\kappa_3)\|A_0^{-\frac12}\tch_{tt}\|^2+\alpha \|\tch_{tt}\|^2+\|\txi_{tt}\|^2_{H_\Gamma}\non\\
&& +\kappa_3\|\nabla \tch_t\|^2+\kappa_3\|\nabla_\Gamma\txi_t\|^2_{H_\Gamma} +\kappa_3\int_\Omega (f'(\chi)+L)\tch_t^2dx\non\\
&& +\kappa_3\int_\Gamma (g'(\xi)+L)\txi_t^2 dS+ \kappa_4\|\tth_t\|^2+\kappa_4\int_\Omega \mathbf{q}_t\cdot \nabla A_0^{-1} \tth_t dx \non\\
&& +\kappa_4\int_\Omega \mathbf{q}_t\cdot \nabla A_0^{-1}\tch_{tt}dx
-\kappa_4\|A_0^{-\frac12}\nabla \cdot \mathbf{q}_t\|^2\label{i1}
\eea
and
\bea
\mathcal{R}_1 &=& \frac12 \int_\Omega f''(\chi)\chi_t\tch_t^2 dx -Q_1\int_\Omega f'(\chi) \tch_{tt} dx+\frac12 \int_\Gamma g''(\xi)\xi_t\txi_t^2 dS\non\\
&&  -Q_1\int_\Gamma g'(\xi) \txi_{tt} dS +L\int_\Omega \tch_t\tch_{tt}+ L\int_\Gamma \txi_t\txi_{tt}dS + Q_1\int_\Gamma \txi_{tt} dS\non\\
&&  -\kappa_3 Q_1\int_\Omega f'(\chi) \tch_tdx-\kappa_3 Q_1 \int_\Gamma g'(\xi) \txi_t dS  +\kappa_3\int_\Omega \tth_t\tch_t dx\non\\
&& +\kappa_3 Q_1\int_\Gamma \txi_t dS+\kappa_3 L(\|\tch_t\|^2+\|\txi_t\|_{H_\Gamma}^2).\label{R1}
\eea
Using the H\"older inequality and choosing a suitable $L$, for $\kappa_3, \kappa_4$ sufficiently small, we find
\bea
\mathcal{Y}_1&\geq  & C_1\|(\tth_t, \tq_t, \tch_t, \txi_t, \tch_{tt})\|_{\mathbb{X}}^2,\label{esy1}\\
\mathcal{I}_1&\geq & C_2\mathcal{Y}_1+\alpha \|\tch_{tt}\|^2+\|\txi_{tt}\|^2_{H_\Gamma},\label{esy1i}
\eea
where the constant $C_1, C_2$ may depend on $\Omega$, $\Gamma$, $\alpha$, $\kappa_3$, $\kappa_4$ and $L$.

Next, we estimate the reminder term $\mathcal{R}_1$. From H\"older inequality, Young's inequality, the Sobolev embedding theorem and
the growth assumptions (H3) on $f$ and $g$, it follows
\bea
\mathcal{R}_1&\leq& \|f''(\chi)\|_{L^6(\Omega)}\|\chi_t\|\|\tch_t\|^2_{L^6(\Omega)}+|Q_1|\|f'(\chi)\|\|\tch_{tt}\|
+\|g''(\xi)\|_{L^6(\Gamma)}\|\xi_t\|\|\txi_t\|^2_{L^6(\Gamma)}\non\\
&&+|Q_1|\|g'(\xi)\|\|\txi_{tt}\|+ L(\|\tch_t\|\|\tch_{tt}\|+\|\txi_t\|_{H_\Gamma}\|\txi_{tt}\|_{H_\Gamma})
+|Q_1||\Gamma|^\frac12\|\txi_{tt}\|_{H_\Gamma}\non\\
&& +\kappa_3|Q_1|(\|f'(\chi)\|\|\tch_t\|+\|g'(\xi)\|_{H_\Gamma}\|\txi_t\|_{H_\Gamma})+\kappa_3\|\tth_t\|\|\tch_t\|\non\\
&& +\kappa_3|Q_1||\Gamma|^\frac12\|\txi_t\|_{H_\Gamma}+\kappa_3 L(\|\tch_t\|^2+\|\txi_t\|_{H_\Gamma}^2)\non\\
&\leq& \epsilon (\|\tch_t\|^2_{V}+\|\txi_t\|^2_{V_\Gamma}+\|\tch_{tt}\|^2+\|\txi_{tt}\|^2_{H_\Gamma}+\|\tth_t\|^2)\non\\
&& + \mathcal{Q}_1(\|(\chi, \xi)\|_{\mathbb{H}^1})(\|\chi_t\|^2\|\tch_t\|^2_{V}+\|\xi_t\|^2\|\txi_t\|_{V_\Gamma}^2)\non\\
&& +\mathcal{Q}_2(\|(\chi, \xi)\|_{\mathbb{H}^1})|Q_1|^2+C(\|\tch_t\|^2+\|\txi_t\|_{H_\Gamma}^2),\label{R1e}
\eea
where $\mathcal{Q}_1, \mathcal{Q}_2$ are certain monotone increasing functions.
Taking $\epsilon$ sufficiently small, from the above estimates  \eqref{dishigh}--\eqref{R1e} we infer
\bea
\frac{d}{dt}\mathcal{Y}_1 + C_3\mathcal{Y}_1
&\leq &  \mathcal{Q}_1(\|(\chi, \xi)\|_{\mathbb{H}^1})(\|\chi_t\|^2+\|\xi_t\|^2)\mathcal{Y}_1\non\\
&& +\mathcal{Q}_2(\|(\chi, \xi)\|_{\mathbb{H}^1})|Q_1|^2+C(\|\tch_t\|^2+\|\txi_t\|_{H_\Gamma}^2),\label{dify1}
\eea
where $C_3$ is a small constant that may depend on  $\Omega$, $\Gamma$, $\alpha$, $\kappa_3$, $\kappa_4$ and $L$, but not on the solution.
Besides, from the integrability of estimate \eqref{kkk} (see Section 5) we easily see that
\be
\int_0^\infty \|\tch_t\|^2+\|\txi_t\|_{H_\Gamma}^2 dt <+\infty.\label{kka}
\ee
Using the dissipative estimate \eqref{disa1} (so that $\mathcal{Q}_1(\|(\chi, \xi)\|_{\mathbb{H}^1}), \mathcal{Q}_2(\|(\chi, \xi)\|_{\mathbb{H}^1})$
are uniformly bounded for all time) and the Gronwall-type lemma (see e.g., \cite[Lemma 2.2]{GP}), then from \eqref{dify1} we infer
\be
\mathcal{Y}_1(t)\leq C_4\mathcal{Y}_1(0)e^{-\frac{C_3t}{2}}+C_5, \quad \forall\, t\geq 0,\non
\ee
where $C_4, C_5$ may depend on $\|(\theta_0, \tq_0, \chi_0, \xi_0, \chi_1)\|_{\mathbb{X}}$. Then, by the definition of $\mathcal{Y}_1$ and \eqref{esy1}, we have
\be
\|(\tth_t(t), \tq_t(t), \tch_t(t), \txi_t(t), \tch_{tt}(t))\|_{\mathbb{X}}\leq C(\|(\theta_0, \tq_0, \chi_0, \xi_0, \chi_1)\|_{\mathbb{Y}}),
\quad \forall\, t\geq 0,\label{unies1}
\ee
which also easily yields that
\be
\|(\theta_t(t), \tq_t(t), \chi_t(t), \xi_t(t), \chi_{tt}(t))\|_{\mathbb{X}}\leq C(\|(\theta_0, \tq_0, \chi_0, \xi_0, \chi_1)\|_{\mathbb{Y}}),
\quad \forall\, t\geq 0.\label{unies2}
\ee
Using the estimate \eqref{unies1}, from the equations \eqref{c1a}--\eqref{c2a} we deduce
\be
\|\nabla \theta(t)\|\leq \|\mathbf{q}_t(t)\|+\|\mathbf{q}(t)\|\leq C, \quad \|\nabla \cdot \mathbf{q}(t)\|\leq \|\tth_t(t)\|+\|\tch_t(t)\|\leq C.
\quad \forall \, t \geq 0,\non
\ee
Applying the curl operator to \eqref{c2a}, we have
\bea
&& (\nabla \times\mathbf{q})_t(t)+(\nabla \times\mathbf{q})(t)=0, \quad \forall\, t\geq 0 \non\\
&& (\nabla \times\mathbf{q})(0)=\nabla \times\mathbf{q}_0, \non
\eea
so that $\|(\nabla \times\mathbf{q})(t)\|\leq \|\nabla \times\mathbf{q}_0\|$, for all $t\geq 0$.
Combining the above estimates and \eqref{disa1}, we get
\be
\|\theta(t)\|_{V}\leq C, \quad \|\mathbf{q}(t)\|_{\mathbf{H}^1(\Omega)}\leq C, \quad \forall\, t\geq 0.\non
\ee
 It remains to prove the estimate of $(\chi, \xi)$ in $\mathbb{H}^3$. To this purpose, we rewrite \eqref{c4} and \eqref{c6} as follows
\bea
&& -\Delta \chi=\mu-f(\chi)-\alpha \chi_t+\theta:=h_1,\label{eep1}\\
&&  -\Delta_\Gamma \xi+\partial_\nu \chi+\beta \xi=- \xi_t - g(\xi)+ \beta \xi:=h_2,\label{eep2}
\eea
where $\beta>0$ is a positive constant.
Since $\mu$ satisfies \eqref{c3} and \eqref{c5}, so that
\be
-\Delta \mu= -(\chi_{tt}+\chi_t), \ \  \text{in} \ \Omega \times (0,\infty), \quad \quad \partial_\nu\mu=0, \ \ \text{on} \ \Gamma \times (0,\infty),\non
\ee
then from estimate \eqref{unies2} we infer
\bea
\|\mu\|_{V}&\leq& C(\|\chi_{tt}+\chi_t\|_{V^*}+|\langle\mu\rangle|\non\\
&\leq& C+| \langle f(\chi)\rangle|+\alpha |\langle \chi_t\rangle|+|\langle \theta \rangle|
+ \frac{1}{|\Omega|} \left(   \left|\int_\Gamma \xi_t dS\right|+ \left|\int_\Gamma g(\xi)dS\right|
\right)\non\\
&\leq& C.\non
\eea
As a result, we have
\be \|h_1(t)\|\leq C, \quad \|h_2(t)\|_{H_\Gamma}\leq C, \quad \forall \, t\geq 0.\non
\ee
Now we apply the regularity theorem \cite[Lemma A.1]{MZ05} to the elliptic problem \eqref{ep1}--\eqref{ep2} (see Section 4),
obtaining
\be
\|(\chi(t), \xi(t))\|_{\mathbb{H}^2}\leq C\|(h_1(t), h_2(t))\|_{\mathbb{H}}\leq C, \quad \forall\, t\geq 0.\non
\ee
Then, from the above estimate and Sobolev embedding theorem we infer
\be \|h_1(t)\|_{V}\leq C, \quad \|h_2(t)\|_{V_\Gamma}\leq C, \quad \forall \, t\geq 0.\non
\ee
An application of the higher-order regularity theorem \cite[Corollary A.1]{MZ05} to the elliptic problem \eqref{ep1}--\eqref{ep2} yields
\be
\|(\chi(t), \xi(t))\|_{\mathbb{H}^3}\leq C\|(h_1(t), h_2(t))\|_{\mathbb{H}^1}\leq C, \quad \forall\, t\geq 0.\non
\ee
Collecting all the above estimates, we have shown that $(\theta(t), \mathbf{q}(t), \chi(t), \xi(t), \chi_{t})$ is uniformly bounded in $\mathbb{Y}$
and the proof is complete.
\end{proof}

\subsection{Existence and uniqueness}

Based on the uniform estimates obtained in the previous section, we are able to prove the existence and uniqueness of suitable solutions to problem
\eqref{c1}--\eqref{c7}.

\begin{theorem}\label{exe}
 Suppose that assumptions (H1)--(H3) are satisfied. Then we have

  (i) For any initial datum
 $(\theta_0, \mathbf{q}_0, \chi_0, \xi_0, \chi_1)\in \mathbb{Y}$, problem \eqref{c1}--\eqref{c7} admits a unique weak solution.

  (ii) For any initial datum
 $(\theta_0, \mathbf{q}_0, \chi_0, \xi_0, \chi_1)\in \mathbb{X}$, problem \eqref{c1}--\eqref{c7} admits a unique energy solution.
 \end{theorem}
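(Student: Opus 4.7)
The plan is to prove existence by a Faedo--Galerkin approximation scheme that respects the product structure of the system, then pass to the limit using the a priori estimates established in Lemmas \ref{es} and \ref{esY}, and finally establish uniqueness via a continuous dependence estimate on the difference of two solutions. I would proceed in the order (weak existence) $\to$ (energy existence) $\to$ (uniqueness of both), since the weak-solution bounds supply the regularity that allows the energy-solution case to be treated by approximation.

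For part (i), I would build Galerkin bases from the eigenfunctions of the Neumann Laplacian $A$ (for the variables $\theta$ and $\mathbf{q}$, or a Stokes-type basis in $\mathbf{W}_0$) and from the eigenfunctions of the Wentzell-type operator naturally associated with $(\chi,\xi)$ on $\mathbb{H}^1$, so that boundary conditions \eqref{c5a}--\eqref{c6a} are automatically built into the approximate problem. The resulting ODE system has a local solution by Carath\'eodory theory, and the a priori estimate \eqref{eesY} of Lemma \ref{esY} is derived at the discrete level (this is the point of the formal calculation in the proof of Lemma \ref{esY}), providing global existence of the Galerkin solutions together with uniform bounds in $\mathbb{Y}$ and the time-derivative bounds \eqref{r2a}--\eqref{r4a}. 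Passage to the limit then uses standard weak-$\ast$ compactness in the spaces of \eqref{r1a}--\eqref{r4a}, together with Aubin--Lions to handle the nonlinear terms $f(\chi^n)\to f(\chi)$ in $H$ and $g(\xi^n)\to g(\xi)$ in $H_\Gamma$ (here the growth assumption (H3) combined with the strong convergence of $\chi^n$ in $L^2(0,T;H^{3-\eta}(\Omega))$ is exactly what is needed). The identities \eqref{ee1}--\eqref{ew4} are recovered by testing with basis elements and density.

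For part (ii), since no direct compactness is available starting from only $\mathbb{X}$ initial data, I would approximate $(\theta_0,\mathbf{q}_0,\chi_0,\xi_0,\chi_1)\in\mathbb{X}$ by a sequence in $\mathbb{Y}$ (mollification of $\chi_0,\xi_0$ respecting the trace constraint, truncation in the spectral expansion for the remaining components). Part (i) gives a weak solution $(\theta^n,\mathbf{q}^n,\chi^n,\xi^n,\chi_t^n)$ for each $n$, and Lemma \ref{es} gives uniform bounds in $\mathbb{X}$ depending only on $\|(\theta_0^n,\ldots)\|_{\mathbb{X}}$, which is bounded. The integral part of \eqref{disa1} furnishes the additional time-regularity $\alpha^{1/2}\chi_t^n\in L^2(0,T;H)$ and $\xi_t^n\in L^2(0,T;H_\Gamma)$, while \eqref{c3a} read backwards yields $\chi_{tt}^n+\chi_t^n\in L^\infty(0,T;D(A_0^{-3/2}))$ in accordance with \eqref{r4}. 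Weak-$\ast$ limits then satisfy \eqref{ee1}--\eqref{ew4}, and initial data are attained through the $C_w([0,T];\mathbb{X})$ framework of the first remark after Definition \ref{energy-weak}.

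Uniqueness is the subtler point and I expect to be the main obstacle, because for energy solutions one only has $\chi_t\in L^\infty(0,T;V^*)$, so pairing the difference equation \eqref{c3a} with $\chi_t^1-\chi_t^2$ in $H$ is not allowed. The idea is to take the difference of two solutions with the same data, denoted $(\bar\theta,\bar{\mathbf q},\bar\chi,\bar\xi,\bar\chi_t)$, test the heat part with $\bar\theta$ and $\bar{\mathbf q}$, and test the $\chi$-equation against $A_0^{-1}\bar\chi_t$ (which is legitimate provided $\langle\bar\chi_t\rangle=0$; this is guaranteed because $\langle\chi_t\rangle$ is determined explicitly by \eqref{cv3} and depends only on initial data). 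This generates the basic dissipative combination
\begin{equation*}
\tfrac{d}{dt}\bigl(\|\bar\theta\|^2+\|\bar{\mathbf q}\|^2+\|A_0^{-1/2}\bar\chi_t\|^2+\|\nabla\bar\chi\|^2+\|\nabla_\Gamma\bar\xi\|_{H_\Gamma}^2\bigr) + \alpha\|\bar\chi_t\|^2 + \|\bar\xi_t\|_{H_\Gamma}^2 \lesssim \cdots,
\end{equation*}
where the right-hand side contains the nonlinear contributions $(f(\chi_1)-f(\chi_2),\bar\chi_t)$ and $(g(\xi_1)-g(\xi_2),\bar\xi_t)_{L^2(\Gamma)}$ as well as $(\bar\theta,\bar\chi_t)$ terms. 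Using (H3) and the uniform $\mathbb{H}^1$-boundedness of $\chi_i,\xi_i$ from Lemma \ref{es}, $f$ and $g$ are locally Lipschitz in the relevant Sobolev sense; the nonlinear terms are absorbed by the dissipative quantities $\alpha\|\bar\chi_t\|^2$, $\|\bar\xi_t\|_{H_\Gamma}^2$ and $\|\nabla\bar\chi\|^2$, leaving a Gronwall inequality that yields $\bar\theta\equiv\bar{\mathbf q}\equiv\bar\chi\equiv\bar\xi\equiv 0$ and, once $\bar\chi\equiv 0$ and $\alpha>0$, also $\bar\chi_t\equiv 0$. This proves uniqueness of energy solutions, and uniqueness of weak solutions follows a fortiori.
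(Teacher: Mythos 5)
Your proposal is correct and follows essentially the same route as the paper: Faedo--Galerkin approximation combined with the a priori estimates of Lemmas \ref{es} and \ref{esY} for existence of weak solutions, testing the difference system with $\bar\theta$, $\bar{\mathbf q}$ and $A_0^{-1}\bar\chi_t$ (using $\alpha>0$ to absorb the nonlinear terms) for uniqueness, and approximation of $\mathbb{X}$-data by $\mathbb{Y}$-data for energy solutions. The only minor deviation is that the paper derives the full continuous dependence estimate in the $\mathbb{X}$-norm for weak solutions and then obtains existence and uniqueness of energy solutions by a density (Cauchy sequence) argument, whereas you pass to the limit by weak-$\ast$ compactness and run the uniqueness computation directly at the energy level; both variants are viable and rest on the same cancellations.
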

 \begin{proof}
 (i) Based on the uniform dissipative estimate \eqref{eesY}, it is standard to prove the existence of  global weak solutions to problem
 \eqref{c1}--\eqref{c7} by using a Faedo--Galerkin scheme as in \cite{GMS09,GMS} for the Cahn-Hilliard equation subject to dynamic
 boundary conditions. The details are omitted here.

 Concerning the uniqueness, it suffices to show the continuous dependence estimate for two solutions
 $(\theta^{(i)}, \mathbf{q}^{(i)}, \chi^{(i)}, \xi^{(i)}, \chi_t^{(i)})$ corresponding to the two sets of data
 $(\theta_0^{(i)}, \mathbf{q}_0^{(i)}, \chi_0^{(i)}, \xi_0^{(i)}, \chi_1^{(i)})$ ($i=1,2$).
 For this purpose, we write down the system for
 $$(\bar\theta, \bar{\mathbf{q}}, \bar{\chi}, \bar \xi, \bar{\chi}_t)=(\tth^{(1)}, \mathbf{q}^{(1)}, \tch^{(1)}, \txi^{(1)}, \tch_t^{(1)})
 -(\tth^{(2)}, \mathbf{q}^{(2)}, \tch^{(2)}, \txi^{(2)}, \tch_t^{(2)}),$$
 such that
 \bea
 &&(\bar\theta+\bar\chi)_t+\nabla \cdot \bar\tq=0,\quad \text{in \, } \Omega\times (0,\infty),\label{dc1}\\
 && \bar{\tq}_t+\bar{\tq}=-\nabla \bar\theta,\quad \text{in \, } \Omega\times (0,\infty),\label{dc2}\\
 &&  \bar{\chi}_{tt}+\bar{\chi}_t-\Delta(\tilde{\mu}^{(1)}-\tilde{\mu}^{(2)})=0,\quad \text{in \, } \Omega\times (0,\infty),\label{dc3}\\
 && \tilde{\mu}^{(1)}-\tilde{\mu}^{(2)}=-\Delta\bar \chi
 +\alpha \bar{\chi}_t+f(\chi^{(1)})-f(\chi^{(2)})-\bar{\theta}, \quad \text{in \, } \Omega\times (0,\infty),\label{dc4}\\
 && \bar{\tq}\cdot \nu =\partial_\nu\tilde{\mu}^{(i)}=0, \ \ i=1,2,\qquad \text{on \, } \Gamma\times (0,\infty),\label{dc5}\\
 &&  \bar{\xi}_t-\Delta_\Gamma \bar{\xi}+\partial_\nu \bar \chi+g(\xi^{(1)})-g(\xi^{(2)})+\bar{Q}_1=0,\quad  \text{on \, }\Gamma\times (0,\infty),\label{dc6}
\eea
 where $\bar{Q}_1=(\langle \chi_1^{(1)}\rangle-\langle\chi_1^{(2)}\rangle)e^{-t}.$

 The regularity of weak solutions allows us to multiply \eqref{dc1} by $\bar\theta$, \eqref{dc2} by $\bar{\mathbf{q}}$ and \eqref{dc3} by $A_0^{-1}\bar{\chi}_t$,
 respectively, and then integrate over $\Omega$. Adding the resulting equations together, we have
\bea
&& \frac{d}{dt}\left(\frac12\|\bar\theta\|^2+\frac12\|\mathbf{q}\|^2+\frac12\|A_0^{-\frac12}\bar{\chi}_t\|^2+\frac12\|\nabla \bar\chi\|^2
+\frac12\|\nabla_\Gamma \bar{\xi}\|^2\right)\non\\
&& +\|\bar{\mathbf{q}}\|^2+\| A_0^{-\frac12} \bar{\chi}_t\|^2+ \alpha\|\bar{\chi}_t\|^2+\|\bar{\xi}_t\|^2_{H_\Gamma}  \non\\
&=& -\bar{Q}_1\int_\Gamma \bar{u}_t dS-\int_\Omega (f(\chi^{(1)})-f(\chi^{(2)})) \bar{\chi}_t dx
-\int_\Gamma (g(\xi^{(1)})-g(\xi^{(2)}))\bar{\xi}_t dS.\label{diff}
\eea
Using the uniform estimates \eqref{disa1} for the two solutions, the growth assumption (H3) and Sobolev embedding theorems, we infer
\bea
-\int_\Omega (f(\chi^{(1)})-f(\chi^{(2)})) \bar{\chi}_t dx&\leq& \|f'\|_{L^\frac{2(p+2)}{p}(\Omega)}\|\bar{\chi}\|_{L^{p+2}(\Omega)}\|\bar{\chi}_t\|\non\\
&\leq&  \frac{\alpha}{2}\| \bar{\chi}_t\|^2+ C\|\bar{\chi}\|_{V}^2\non\\
&\leq& \frac{\alpha}{2}\| \bar{\chi}_t\|^2+ C\|\nabla \bar{\chi}\|^2.\non
\eea
Similarly, we have
\bea
-\int_\Gamma (g(\xi^{(1)})-g(\xi^{(2)}))\bar{\xi}_t dS&\leq& \frac12\|\bar{\xi}_t\|_{H_\Gamma}^2+C\|\bar{\xi}\|_{V_\Gamma}^2\non\\
&\leq& \frac12\|\bar{\xi}_t\|_{H_\Gamma}^2+C\|\nabla_\Gamma \bar \xi\|_{H_\Gamma}^2+ C\|\nabla \bar{\chi}\|^2.\non
\eea
Moreover, by H\"older inequality and Young inequality, we obtain
\be
-\bar{Q}_1\int_\Gamma \bar{\xi}_t dS\leq \frac12\|\bar{\xi}_t\|^2_{H_\Gamma} +C(\bar{Q}_1)^2.\non
\ee
Therefore, we find
\bea
&& \frac{d}{dt}\left(\frac12\|\bar\theta\|^2+\frac12\|\mathbf{q}\|^2+\frac12\|A_0^{-\frac12}\bar{\chi}_t\|^2+\frac12\|\nabla \bar\chi\|^2
+\frac12\|\nabla_\Gamma \bar{\xi}\|^2\right)\non\\
&\leq& C\|\nabla \bar{\chi}\|^2+C\|\nabla_\Gamma \bar \xi\|_{H_\Gamma}^2+C(\bar{Q}_1)^2.\non
\eea
Then, by the Gronwall lemma and the conservation properties \eqref{cv1}--\eqref{cv2}, we can deduce
\bea
&\|((\theta^{(1)}-\theta^{(2)})(t), (\mathbf{q}^{(1)}-\mathbf{q}^{(2)})(t),
(\chi^{(1)}-\chi^{(2)})(t), (\xi^{(1)}- \xi^{(2)})(t), (\chi_t^{(1)}-\chi_t^{(2)})(t))\|_{\mathbb{X}}\non\\
&\leq C_1e^{C_2T}\|(\theta_0^{(1)}-\theta_0^{(2)}, \mathbf{q}^{(1)}_0-\mathbf{q}^{(2)}_0,
\chi^{(1)}_0-\chi^{(2)}_0, \xi^{(1)}_0- \xi^{(2)}_0, \chi^{(1)}_1-\chi^{(2)}_1)\|_{\mathbb{X}},\label{conti}
\eea
for any $t \in [0, T]$, where $C_1, C_2$ only depend on the $\mathbb{X}$-norms of the initial data, $\alpha$, $|\Omega|$ and $|\Gamma|$.
This completes the proof for uniqueness.

(ii) We note that in the continuous dependence estimate for weak solutions \eqref{conti}, the constant $C_1, C_2$ only depend on the $\mathbb{X}$-norms
of the initial data. This fact enables us to prove the existence and uniqueness of energy solutions to problem \eqref{c1}--\eqref{c7} by
using the standard density argument. The details are left to the interested reader.
\end{proof}

 A straightforward consequence of the above result yields
\bc
Suppose that assumptions (H1)--(H3) are satisfied.

(i) For any initial datum $(\theta_0, \mathbf{q}_0, \chi_0, \xi_0, \chi_1) \in \mathbb{Y}$, the unique global weak solution to problem
\eqref{c1}--\eqref{c7} defines a semigroup $S_1(t): \mathbb{Y}\to \mathbb{Y}$ such that
$$S_1(t)(\theta_0, \mathbf{q}_0, \chi_0, \xi_0, \chi_1)=(\theta(t), \mathbf{q}(t), \chi(t), \xi(t), \chi_t(t)), \quad \forall \, t\geq 0.$$

(ii) For any initial datum $(\theta_0, \mathbf{q}_0, \chi_0, \xi_0, \chi_1) \in \mathbb{X}$, the unique global energy solution to problem
\eqref{c1}--\eqref{c7} defines a strongly continuous semigroup $S_2(t): \mathbb{X}\to \mathbb{X}$ such that
$$S_2(t)(\theta_0, \mathbf{q}_0, \chi_0, \xi_0, \chi_1)=(\theta(t), \mathbf{q}(t), \chi(t), \xi(t), \chi_t(t)), \quad \forall \, t\geq 0.$$
\ec

 \br The estimate \eqref{conti} provides a continuous dependence result in the (lower) $\mathbb{X}$-norm. As a consequence, $S_1(t)$ turns out
 to be a closed semigroup in the sense of \cite{PZ07}.
 \er


\section{Global attractor for energy solutions}\setcounter{equation}{0}
\label{sec4}

In this section, we study the associated infinite-dimensional dynamical system defined by the semigroup $S_2(t)$ on $\mathbb{X}$. More precisely, we will prove that $S_2(t)$ possesses the global attractor in the phase space
$$\mathbb{X}_{M, M'}=\{(z_1, \mathbf{z}_2, z_3,z_4,z_5)\in \mathbb{X}:\ |\langle z_1+z_3\rangle|\leq M,\  |\langle z_3+z_5\rangle|\leq M, \ |z_5|\leq M'\},$$
endowed with the metric induced by the norm on $\mathbb{X}$. Here $M, M'\geq 0$ are arbitrary constants.
We note that the choice of the phase space is due to the constraints \eqref{cv1}, \eqref{cv2} and the decay property \eqref{cv3}.

We now state the main result of this section.
\begin{theorem}\label{GA}
Suppose that (H1)--(H3) are satisfied. The semigroup $S_2(t)$ defined by the global energy solutions to problem \eqref{c1}--\eqref{c7}
on $\mathbb{X}_{M,M'}$ possesses a compact connected global attractor $\mathcal{A}$ which is bounded in $\mathbb{Y}$.
 \end{theorem}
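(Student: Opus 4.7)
The plan is to verify the three classical ingredients for existence of a global attractor: strong continuity of the semigroup $S_2(t)$, existence of a bounded absorbing set, and asymptotic compactness on this set.

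Strong continuity of $S_2(t)$ on $\mathbb{X}_{M,M'}$ is an immediate consequence of the continuous dependence estimate \eqref{conti}. A bounded absorbing set $\mathcal{B}_0 \subset \mathbb{X}_{M,M'}$ is provided by Lemma~\ref{es}: it suffices to take $\mathcal{B}_0$ to be a ball in $\mathbb{X}$ of radius $R$ depending only on $M$, $M'$ and the constants $\rho_1, \rho_2$ of \eqref{disa1}. If needed, one can pass to a positively invariant absorbing set by replacing $\mathcal{B}_0$ with the $\mathbb{X}$-closure of $\bigcup_{t\geq T_0} S_2(t)\mathcal{B}_0$ for $T_0$ large enough.

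The crux of the argument is the asymptotic compactness. The obstacle, already noted in Remark~\ref{regularity}, is that the dynamic boundary condition \eqref{c6} destroys the parabolic smoothing for $\chi$, so energy solutions do not automatically enter $\mathbb{Y}$. To bypass this I would use a decomposition $S_2(t)z_0 = L(t)z_0 + N(t)z_0$ in the spirit of the strategy employed in \cite{CGG,GG13}. The linear part $L(t)z_0$ solves the system obtained by setting $f \equiv g \equiv 0$ and dropping the $\theta$-coupling in $\mu$, with initial datum $z_0$; energy estimates parallel to those of Lemma~\ref{es} yield exponential decay of $\|L(t)z_0\|_{\mathbb{X}}$, uniformly in $z_0 \in \mathcal{B}_0$. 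The nonlinear part $N(t)z_0$ satisfies the full system with zero initial datum and with source terms $f(\chi), g(\xi), \theta$ evaluated on $S_2(t)z_0$; thanks to the $\mathbb{X}$-bound of $\mathcal{B}_0$ and the growth assumptions (H3), these sources are uniformly controlled. Starting from zero initial data, I would close higher-order estimates on $N(t)z_0$ by mimicking Lemma~\ref{esY}, differentiating in time and absorbing the remainder terms via the uniform bound on the sources and the dissipative integrability \eqref{kka}; this yields $\|N(t)z_0\|_{\mathbb{Y}} \leq C$ uniformly in $t$ and $z_0 \in \mathcal{B}_0$. The compact embedding $\mathbb{Y}\hookrightarrow \mathbb{X}$ then gives the asymptotic compactness of $S_2(t)$.

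With the three ingredients in place, the standard abstract theorem yields a compact global attractor $\mathcal{A}=\omega(\mathcal{B}_0)\subset \mathbb{X}_{M,M'}$, whose $\mathbb{Y}$-boundedness follows from the uniform $\mathbb{Y}$-bound on $N(t)\mathcal{B}_0$. Connectedness of $\mathcal{A}$ is inherited from the convexity (hence path-connectedness) of $\mathbb{X}_{M,M'}$ together with the continuity of each map $S_2(t)$. The hardest step is the $\mathbb{Y}$-bound on $N(t)z_0$: the absence of any $H^2$-regularization for $\chi$ forces one to rely entirely on the zero initial data of the $N$-component and on the delicate interplay between interior and boundary dissipation to close the higher-order estimates against nonlinear remainders of the type appearing in $\mathcal{R}_1$ of Lemma~\ref{esY}.
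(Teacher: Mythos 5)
Your high-level architecture (absorbing set from Lemma \ref{es}, then a splitting of the trajectory into an exponentially decaying part and a part bounded in a compactly embedded space) is exactly the paper's (Propositions \ref{abo} and \ref{precom}), and your remarks on continuity, convexity/connectedness and the absorbing ball are fine. But the specific decomposition you propose is not the one the paper uses, and it is chosen in the paper precisely because yours does not close. The paper does \emph{not} split $S_2(t)=L(t)+N(t)$ with the whole nonlinearity fed into $N$ as an external source evaluated on the full solution. It uses a monotone splitting: with $\hat f(y)=f(y)+\gamma_0 y$ and $\hat g(y)=g(y)+\gamma_1 y$ nondecreasing, the compact part \eqref{c} carries the nonlinearity $\hat f(\chi^c)$, $\hat g(\xi^c)$ \emph{in its own unknown} plus the linear sources $\gamma_0\chi$, $\gamma_1\xi$, while the decaying part \eqref{d} carries the monotone difference $\hat f(\chi)-\hat f(\chi^c)$, whose sign (cf. \eqref{pos}) replaces the linear exponential stability you invoke.

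Two steps of your plan fail concretely. First, the $\mathbb{H}^3$ (hence $\mathbb{Y}$) bound on $N(t)z_0$ cannot be obtained: the elliptic bootstrap applied to $-\Delta\chi^N=\mu^N-f(\chi)-\alpha\chi^N_t+\theta$ requires the right-hand side in $H^1(\Omega)$, but for an energy solution one only knows $\chi\in H^1(\Omega)$, and with the cubic growth allowed by (H3) in $d=3$ one gets $\nabla f(\chi)=f'(\chi)\nabla\chi\in L^{3}\cdot L^{2}\subset L^{6/5}(\Omega)$ only, so $f(\chi)\in W^{1,6/5}(\Omega)\setminus H^1(\Omega)$ in general; the bootstrap stops at $\mathbb{H}^2$ and the claimed $\mathbb{Y}$-boundedness of $\mathcal{A}$ is lost. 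In the paper the corresponding term is $\hat f(\chi^c)$ with $\chi^c$ already known to lie in $\mathbb{H}^2\hookrightarrow L^\infty\cap W^{1,6}$, so $\hat f(\chi^c)\in H^1$ and \eqref{ep1}--\eqref{ep2} bootstrap to \eqref{highatt}; the rough unknown $\chi$ enters only through the harmless linear source $\gamma_0\chi\in H^1$. Second, even the first time-differentiated estimate on $N(t)z_0$ — which you already need for the $\mathbb{H}^2$ step, since it is what gives $\chi^N_t\in H$ and $\chi^N_{tt}+\chi^N_t\in V^*$ uniformly in time — does not close: differentiating your $N$-system produces the external sources $f'(\chi)\chi_t$ and, because you dropped the temperature coupling from $L(t)$, the full $\theta_t$, which for an energy solution lies only in $L^\infty(0,T;V^*)$ by \eqref{r2}. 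The resulting term $(\theta_t,\tch^N_{tt})$ cannot be absorbed by the available dissipation $\alpha\|\tch^N_{tt}\|^2$; the paper controls the analogous coupling by the exact cancellation of $\pm\int_\Omega\tth_t\tch_{tt}\,dx$ between \eqref{de1} and \eqref{de3}, which requires $\theta$ and $\chi$ to be differentiated inside the same self-consistently coupled subsystem — exactly what the decomposition \eqref{c}--\eqref{d} preserves and your source-term splitting destroys. Keeping $-\theta^N$ inside $\mu^N$ repairs the second defect but not the first, which is the real reason the monotone splitting is used.
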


The proof of Theorem \ref{GA} consists of several steps. First, we show that the restriction of $S_2(t)$ on $\mathbb{X}_{M, M'}$ admits a bounded absorbing set.
\bp
\label{abo}
There exists $R_0>0$ such that the ball $\mathcal{B}_0$ in $\mathbb{X}_{M, M'}$ of radius $R_0$ centered at zero is absorbing for the semigroup $S(t)$.
Namely, for every bounded set $\mathcal{B}\subset \mathbb{X}_{M, M'}$, there exists $t_0=t_0(\mathcal{B}, M, M')$ such that
$$ S_2(t) \mathcal{B}\subset \mathcal{B}_0, \quad \forall\, t\geq t_0.$$
\ep
\begin{proof}
For  every bounded set $\mathcal{B}\subset \mathbb{X}_{M, M'}$, consider an initial datum
$(\theta_0, \mathbf{q}_0, \chi_0, \xi_0, \chi_1)\in \mathcal{B}\subset \mathbb{X}_{M, M'}$. Then we have
\be
\|(\theta_0, \mathbf{q}_0, \chi_0, \xi_0,\chi_1) \|_\mathbb{X}\leq R,\non
\ee
where $R>0$ is a constant depending on $\mathcal{B}$. Besides, we observe that
\be
|\langle\theta_0\rangle|\leq 2M+M', \quad |\langle\chi_0\rangle|\leq M+M', \quad |\langle\chi_1\rangle|\leq M'.\non
\ee
Thus, from the definition of $\mathcal{Y}$ (cf. \eqref{Ya}) we infer
\be
\mathcal{Y}(0)\leq C(R, M, M').\non
\ee
It follows from \eqref{Y1} and \eqref{Y2} that there exists $t_0=t_0(R,M,M')>0$ such that
\be
\|\theta(t)\|^2+\|\mathbf{q}(t)\|^2+\|\chi(t)\|_{V}^2+\|\xi(t)\|_{V_\Gamma}^2+\|\chi_t(t)\|_{V^*}^2\leq R_0, \quad \forall\, t\geq t_0,\non
\ee
where $R_0$ may depend on $M$ and $M'$ but is independent of $R$ and $t$. The proof is complete.
\end{proof}
Next, we study the precompactness of trajectories in $\mathbb{X}$.
\bp\label{precom}
Suppose that assumptions (H1)--(H3) are satisfied. Let $(\theta, \mathbf{q}, \chi, \xi, \chi_t)$ be the unique energy solution to problem
\eqref{c1}--\eqref{c7} given by Theorem \ref{exe}-(ii), with initial datum $(\theta_0, \mathbf{q}_0, \chi_0, \xi_0, \chi_1) \in \mathbb{X}$.
Then the orbit $$ \bigcup_{t\geq 0} (\theta, \mathbf{q}, \chi, \xi, \chi_t)(t)$$
is precompact in $\mathbb{X}$.
\ep
\begin{proof}
Similar to \cite{GPS2}, from the assumption (H2), Remark \ref{AH}(1) and the Sobolev embedding theorem,  it follows that there exists a sufficient
large constant $\gamma_0>c_0$ such that
\be
 \frac12\|\nabla z\|^2+(\gamma_0-2c_0)\|z\|^2\geq \int_\Omega f'(\zeta)z^2 dx,\quad \forall\, z, \zeta\in V.\label{k1}
\ee
Similarly, we can find a sufficient large constant $\gamma_1>c_1$ such that
\be
\frac12\|\nabla z\|_{H_\Gamma}^2+(\gamma_1-2c_1)\|z\|_{H_\Gamma}^2\geq \int_\Gamma g'(\zeta)z^2 dS,\quad \forall\, z, \zeta\in V_\Gamma.\label{k2}
\ee
We introduce
\be\label{k2bis}
\hat{f}(y)=f(y)+\gamma_0 y, \quad \hat{g}(y)=g(y)+\gamma_1 y, \quad y\in \mathbb{R}.
\ee
It is clear that $\hat{f}, \hat{g}$ are monotone nondecreasing functions in $\mathbb{R}$.
Then we split the solution to problem \eqref{c1}--\eqref{c7} as follows:
\be
(\theta, \mathbf{q}, \chi, \xi, \chi_t)(t)=(\theta^d, \mathbf{q}^d, \chi^d, \xi^d, \chi^d_t)(t)+(\theta^c, \mathbf{q}^c, \chi^c, \xi^c, \chi^c_t)(t),\label{de}
\ee
where
\be
\begin{cases}
& (\theta^d+\chi^d)_t+\nabla \cdot \mathbf{q}^d=0, \quad \text{in \, }  \Omega\times(0,\infty),\\
& \mathbf{q}^d_t+\mathbf{q}^d+\nabla \theta_d=0,\quad \text{in \, }  \Omega\times(0,\infty),\\
& \chi^d_{tt}+\chi^d_t+A\mu^d=0, \quad \text{in \, }  \Omega\times(0,\infty),\\
&\mu^d=A\chi^d+\hat{f}(\chi)-\hat{f}(\chi^c)+\alpha \chi^d_t-\theta^d,\quad \text{in \, }  \Omega\times(0,\infty),\\
& \mathbf{q}^d\cdot \nu =\partial_\nu \mu^d=0, \quad \text{on \, }  \Gamma\times(0,\infty),\\
& \xi^d_t-\Delta_\Gamma \xi^d+\partial_\nu \chi^d+\hat{g}(\xi)-\hat{g}(\xi^c)=0, \quad \text{on \, }  \Gamma\times(0,\infty),\\
& \theta^d(0)=\tilde{\theta_0}, \ \mathbf{q}^d(0)=\mathbf{q}_0,\  \chi^d(0)=\tilde{\chi}_0, \ \xi^d(0)=\xi_0, \ \chi^d_t(0)=\tilde{\chi}_1,
\quad \text{on \, }  \Omega,\end{cases}\label{d}
\ee
and
\be
\begin{cases}
&   (\theta^c+\chi^c)_t+\nabla \cdot \mathbf{q}^c=0, \quad \text{in \, }  \Omega\times(0,\infty),\\
& \mathbf{q}^c_t+\mathbf{q}^c+ \nabla \theta^c=0, \quad \text{in \, }  \Omega\times(0,\infty),\\
& \chi^c_{tt}+\chi^c_t+A\mu^c=0, \quad \text{in \, }  \Omega\times(0,\infty),\\
& \mu^c=A\chi^c+\hat{f}(\chi^c)+\alpha \chi^c_t-\theta^c-\gamma_0 \chi,\quad \text{in \, }  \Omega\times(0,\infty),\\
& \mathbf{q}^c\cdot \nu =\partial_\nu \mu^c=0, \quad \text{on \, }  \Gamma\times(0,\infty),\\
&  \xi^c_t-\Delta_\Gamma \xi^c+\partial_\nu \chi^c+\hat{g}(\xi^c)=\gamma_1 \xi, \quad \text{on \, }  \Gamma\times(0,\infty),\\
& \theta^c(0)=\langle\theta_0\rangle, \ \mathbf{q}^c(0)=\mathbf{0},\  \chi^c(0)=\langle\chi_0\rangle, \ \xi^c(0)=0, \ \chi^c_t(0)=\langle\chi_1\rangle,
\quad \text{on \, }  \Omega.\end{cases}\label{c}
\ee
In \eqref{c}, we consider $(\chi, \xi)\in L^\infty(0, \infty; \mathbb{H}^1)$ as given.
Then, in analogy to the proof of Lemma \ref{es}, we can prove that problem \eqref{c} admits a unique global solution such that
\bea
&&\|(\theta^c, \mathbf{q}^c, \chi^c, \xi^c, \chi^c_t)(t)\|_{\mathbb{X}}\leq C, \quad \forall\, t\geq 0, \label{ec1}\\
&&\int_t^{t+1} (\alpha \|\chi^c_t(\tau)\|^2+\|\xi^c_t(\tau)\|_{H_\Gamma}^2+ \|(\theta^c, \mathbf{q}^c, \chi^c, \xi^c, \chi_t^c)(\tau)\|_\mathbb{X}^2)d\tau \leq C,
\quad \forall\, t\geq 0.\label{ec2}
\eea
Due to \eqref{disa1} and the decomposition \eqref{de}, we obtain similar uniform estimates for the decay part $(\theta^d, \mathbf{q}^d, \chi^d, \xi^d, \chi^d_t)(t)$.

Next, we show that $\|(\theta^d, \mathbf{q}^d, \chi^d, \xi^d, \chi^d_t)(t)\|_\mathbb{X}$ indeed decays to zero exponentially fast as time tends to infinity.
Due to the choice of initial data, it is easy to verify that
\be
\langle \theta^d(t)\rangle=\langle \chi^d(t)\rangle=\langle \chi^d_t(t)\rangle=0, \quad \forall\, t\geq 0.\label{dzeromean}
\ee
As in the proof of Lemma \ref{es}, in \eqref{d} we multiply the first equation by $\theta^d$, the second equation by $\mathbf{q}^d$,
the third equation by $A_0^{-1}(\chi_t^d+\kappa_1\chi^d)$ and we integrate over $\Omega$.
Then, summing up all the resulting equations and adding the functional $\kappa_2(\mathbf{q}^d, \nabla A_0^{-1}\theta^d)$ ($\kappa_1, \kappa_2$ are
positive constants to be determined later), we have
\be
\frac{d}{dt} \mathcal{Y}^d(t) +\mathcal{I}^d(t)\leq \mathcal{R}^d(t), \quad \forall \, t \geq 0,
\label{dd}
\ee
where
\bea
\mathcal{Y}^d&=&\frac12\|\theta^d\|^2+\frac{1}{2}\|\mathbf{q}^d\|^2
+\frac{1}{2}\|A_0^{-\frac12}\chi^d_t\|^2+ \frac12\|\nabla \chi^d\|^2+\int_\Omega (\hat{f}(\chi)-\hat{f}(\chi^c))\chi^d dx\non\\
&&  -\frac12 \int_\Omega \hat{f}'(\chi) (\chi^d)^2dx +\frac12\|\nabla_\Gamma \xi^d\|_{H_\Gamma}^2+\frac{\kappa_1}{2}\|\xi^d\|_{H_\Gamma}^2\non\\
&& + \int_\Gamma (\hat{g}(\xi)-\hat{g}(\xi^c)) \xi^d dS-\frac12 \int_\Gamma \hat{g}'(\xi)(\xi^d)^2 dS
+ \kappa_1\int_\Omega A_0^{-\frac12}\chi^d_t A_0^{-\frac12}\chi^d dx\non\\
&& +\frac{\kappa_1}{2}\| A_0^{-\frac12}\chi^d\|^2+\frac{\kappa_1\alpha}{2}\|\chi^d\|^2+ \kappa_2\int_\Omega \mathbf{q}\cdot\nabla A_0^{-1}\theta^d dx,\non
\eea
\bea
\mathcal{I}^d&=& \|\mathbf{q}^d\|^2+(1-\kappa_1)\|A_0^{-\frac12}\chi^d_t\|^2+\alpha \|\chi^d_t\|^2+\|\xi^d_t\|^2_{H_\Gamma}+\kappa_1\|\nabla \chi^d\|^2\non\\
&& +\kappa_1\|\nabla_\Gamma \xi^d\|^2_{H_\Gamma}
+\kappa_1\left(\int_\Omega (\hat{f}(\chi)-\hat{f}(\chi^c))\chi^d dx+\int_\Gamma (\hat{g}(\xi)-\hat{g}(\xi^c))\xi^d dS\right)\non\\
&& +\kappa_2\|\theta^d\|^2\non
\eea
and
\bea
\mathcal{R}^d & =& \int_\Omega (\hat{f}'(\chi)-\hat{f}'(\chi^c))\chi^c_t \chi^d dx-\int_\Omega \hat{f}''(\chi)\chi_t(\chi^d)^2dx
+ \int_\Gamma (\hat{g}'(\xi)-\hat{g}'(\xi^c))\xi^c_t \xi^d dS\non\\
&& -\int_\Omega \hat{g}''(\xi)\xi_t(\xi^d)^2dS-\kappa_2\int_\Omega \mathbf{q}^d\cdot \nabla A_0^{-1} \theta^d dx
-\kappa_2\int_\Omega \mathbf{q}^d\cdot \nabla A_0^{-1}\chi^d_tdx\non\\
&& +\kappa_2\|A_0^{-\frac12}\nabla \cdot \mathbf{q}^d\|^2.\non
\eea
We note that $\hat{f}$ and $\hat{g}$ are monotone nondecreasing functions. Moreover, if $\gamma_0, \gamma_1$ are sufficiently large, we have
\be
\left(\int_\Omega
(\hat{f}(\chi)-\hat{f}(\chi^c))\chi^d dx+\int_\Gamma (\hat{g}(\xi)-\hat{g}(\xi^c))\xi^d dS\right)\geq \|\chi^d\|^2+\|u^d\|_{H_\Gamma}^2,\label{pos}
\ee
which implies that
\be
\mathcal{I}^d(t)\geq C\|(\theta^d, \mathbf{q}^d, \chi^d, \xi^d, \chi^d_t)\|^2_{\mathbb{X}}.\non
\ee

We now estimate $\mathcal{R}^d(t)$. First, we observe that the last three terms can be evaluated exactly as in \eqref{kk}.
Using the uniform estimate \eqref{disa1}, \eqref{ec1}, \eqref{dzeromean}, the Sobolev embedding inequality and the Poincar\'e inequality, we deduce that, for the case $d=3$, there holds (the case $d=2$ is similar)
\color{black}
\bea
&& \int_\Omega (\hat{f}'(\chi)-\hat{f}'(\chi^c))\chi^c_t \chi^d dx-\int_\Omega \hat{f}''(\chi)\chi_t(\chi^d)^2dx\non\\
&\leq& C(1+\|f''\|_{L^6(\Omega)})(\|\chi^c_t\|+\|\chi_t\|)\|\chi^d\|_{L^6(\Omega)}^2\non\\
&\leq& \frac{\kappa_1}{2}\|\nabla \chi^d\|^2+C(\|\chi^c_t \|^2+\|\chi_t\|^2)\|\nabla \chi^d\|^2.\non
\eea
Similarly, we have
\bea
&& \int_\Gamma (\hat{g}'(\xi)-\hat{g}'(\xi^c))\xi^c_t \xi^d dS-\int_\Omega \hat{g}''(\xi)\xi_t(\xi^d)^2dS\non\\
&\leq &  \frac{\kappa_1}{2}(\|\nabla_\Gamma \xi^d\|_{H_\Gamma}^2+\|\xi^d\|_{H_\Gamma}^2)\non\\
&& +C(\|\xi^c_t \|_{H_\Gamma}^2+\|\xi_t \|_{H_\Gamma}^2)(\|\nabla_\Gamma \xi^d\|_{H_\Gamma}^2+\|\xi^d\|_{H_\Gamma}^2).\non
\eea
Due to \eqref{k1} and \eqref{k2}, we get
\bea
&& \int_\Omega (\hat{f}(\chi)-\hat{f}(\chi^c))\chi^d dx -\frac12 \int_\Omega \hat{f}'(\chi) (\chi^d)^2dx\non\\
&\geq & \frac12 (\gamma_0-2c_0) \|\chi^d\|^2-\frac12 \int_\Omega f'(\chi) (\chi^d)^2dx\non\\
&\geq & -\frac14\|\nabla \chi^d\|^2,\non
\eea
and
\be
\int_\Gamma (\hat{g}(\xi)-\hat{g}(\xi^c)) \xi^d dS-\frac12 \int_\Gamma \hat{g}'(\xi)(\xi^d)^2 dS\geq -\frac14\|\nabla_\Gamma \xi^d\|_{H_\Gamma}^2.\non
\ee
Then, taking $\kappa_1$ and $\kappa_2$ small enough, we can find $\eta>1$ such that
\be
\eta^{-1}\|(\theta^d, \mathbf{q}^d, \chi^d, \xi^d, \chi^d_t)\|^2_{\mathbb{X}}
\leq \mathcal{Y}^d\leq \eta \|(\theta^d, \mathbf{q}^d, \chi^d, \xi^d, \chi^d_t)\|^2_{\mathbb{X}}.\label{equa}
\ee
Thus, from the above estimate and \eqref{dd} we infer that there exist two positive constants $K_1, K_2$ such that the following estimate holds
\be
\frac{d}{dt}\mathcal{Y}^d+K_1\mathcal{Y}^d\leq K_2(\|\chi^c_t \|^2+\|\chi_t\|^2+\|\xi^c_t \|_{H_\Gamma}^2+\|\xi_t \|_{H_\Gamma}^2)\mathcal{Y}^d.\non
\ee
In \eqref{c}, we take $v=\tth^c$ in the first equation, $\mathbf{v}=\mathbf{q}^c$ in the second equation and $w=A^{-1}_0 \tch^c_t$ in the third equation.
Adding the results together, we obtain
\bea
&& \frac{d}{dt}\left(\frac12\|\tth^c\|^2+\frac{1}{2}\|\tq^c\|^2 +\frac{1}{2}\|A_0^{-\frac12}\tch^c_t\|^2
+ \frac12\|\nabla \tch^c\|^2+\int_\Omega \hat{F}(\chi^c)dx \right.\non\\
&& \left. -\gamma_0\int_\Omega \chi\tch^c dx+\frac12\|\nabla_\Gamma \txi^c\|_{H_\Gamma}^2
+ \int_\Gamma \hat{G}(\xi^c)dS -\gamma_1\int_\Gamma \xi\txi^c dS\right)\non\\
&&  +\|\tq^c\|^2+\|A_0^{-\frac12}\tch^c_t\|^2+\alpha \|\tch^c_t\|^2+\|\txi^c_t\|^2_{H_\Gamma}\non\\
& =& \langle\chi^c_t\rangle\int_\Omega \hat{f}(\chi^c) dx -\gamma_0\int_\Omega \chi_t\tch^c dx
+\langle\chi^c_t\rangle\int_\Gamma \hat{g}(\xi^c) dS-\gamma_1\int_\Gamma u_t \txi^cdS\non\\
&&-\langle\chi^c_t\rangle\int_\Gamma \txi^c_t dS.\label{k3}
\eea
On account of the fact $\langle \chi^c_t(t)\rangle=\langle\chi_t(t)\rangle=\langle\chi_1\rangle e^{-t}$, for all $t\geq 0$,
and the uniform estimates \eqref{disa1}, \eqref{ec2},
then, $\forall \, \epsilon>0$, the right-hand side \eqref{k3} can be evaluated as follows
\bea
& \langle\chi^c_t(t)\rangle\int_\Omega \hat{f}(\chi^c)(t) dx -\gamma_0\int_\Omega \chi_t(t)\tch^c (t)dx
+ \langle\chi^c_t(t)\rangle\int_\Gamma \hat{g}(\xi^c)(t) dS\non\\
&-\gamma_1\int_\Gamma \xi_t (t)\txi^c(t)dS-\langle\chi^c_t(t)\rangle\int_\Gamma \txi^c_t (t)dS\non\\
&\leq |\langle\chi_1\rangle|e^{-t}(\|\hat{f}(\chi^c)(t)\|_{L^1(\Omega)}+\|\hat{g}(\xi^c)(t)\|_{L^1(\Gamma)}+\|\txi^c_t(t)\|_{L^1(\Gamma)})\non\\
&+C\|\chi_t(t)\|\|\tch^c(t)\|+C\|\xi_t(t)\|_{H_\Gamma}\| \txi^c(t)\|_{H_\Gamma}\non\\
&\leq \frac12 \|\txi^c_t(t)\|^2_{H_\Gamma}+ \epsilon+\frac{C}{\epsilon}(\|\chi_t(t)\|^2+\|\xi_t(t)\|^2_{H_\Gamma})
+C(e^{-t}+e^{-2t}), \quad \forall\, t \geq 0.\label{k4}
\eea
As a result, for any $\epsilon>0$, we deduce from \eqref{k3}, \eqref{k4} and \eqref{kka} that
\be
\int_s^t \alpha \|\tch^c_t(\tau)\|^2+\|\txi^c_t(\tau)\|^2_{H_\Gamma} d\tau\leq \epsilon(t-s)+ C_\epsilon, \quad \forall \, t>s>0,\non
\ee
from which, combining with  estimate \eqref{kka},  we infer, for any $\epsilon>0$,
\bea
&& \int_s^t  K_2(\|\chi^c_t \|^2+\|\chi_t\|^2+\|\xi^c_t \|_{H_\Gamma}^2+\|\xi_t \|_{H_\Gamma}^2)d\tau\non\\
&\leq& \epsilon(t-s)+ C_\epsilon, \quad \forall\, t>s>0.\label{sss}
\eea
Then, an application of the Gronwall-type lemma (see e.g., \cite[Lemma 2.2]{GP}) allows to conclude that
\be
\mathcal{Y}^d(t)\leq C\mathcal{Y}(0)e^{-\frac{K_1}{2}t}, \quad \forall\, t\geq 0,\label{expatt}
\ee
which, together with \eqref{equa}, yields the exponential decay of $(\theta^d, \mathbf{q}^d, \chi^d, \xi^d, \chi^d_t)(t)$ in $\mathbb{X}$.

Finally, we prove that $(\theta^c, \mathbf{q}^c, \chi^c, \xi^c, \chi^c_t)(t)$ is bounded in a space that can be compactly embedded into $\mathbb{X}$.
To this aim, we (formally) differentiate \eqref{c} with respect to time to get
\be
\begin{cases}
&  (\theta^c_t+\chi^c_t)_{t}+\nabla \cdot \mathbf{q}^c_t=0, \quad \text{in \, }  \Omega\times(0,\infty),\\
& \mathbf{q}^c_{tt}+\mathbf{q}^c_t+ \nabla \theta^c_t=0, \quad \text{in \, }  \Omega\times(0,\infty),\\
& \chi^c_{ttt}+\chi^c_{tt}+A\mu^c_t=0, \quad \text{in \, }  \Omega\times(0,\infty),\\
& \mu^c_t=A\chi^c_t+\hat{f}'(\chi^c)\chi^c_t+\alpha \chi^c_{tt}-\theta^c_t-\gamma_0 \chi_t,\quad \text{in \, }  \Omega\times(0,\infty),\\
& \mathbf{q}^c_t\cdot \nu =\partial_\nu \mu^c_t=0, \quad \text{on \, }  \Gamma\times(0,\infty),\\
&  \xi^c_{tt}-\Delta_\Gamma \xi^c_t+\partial_\nu \chi^c_t+\hat{g}'(\xi^c)\xi^c_t=\gamma_1 \xi_t, \quad \text{on \, }  \Gamma\times(0,\infty),\\
&\theta^c_t(0)=0, \ \mathbf{q}^c_t(0)=\mathbf{0},\  \chi^c_t(0)=\langle\chi_1\rangle, \quad \text{in \, }  \Omega, \\
& \xi^c_t(0)=-\hat{g}(0)+\gamma_1 u_0,
\ \chi^c_{tt}(0)=\langle\chi_1\rangle+\gamma_0A\chi_0, \quad \text{in \, }  \Omega.
\end{cases}\label{cc}
\ee
We recall that
\be
\langle\chi_t^c(t)\rangle=\langle\chi_1\rangle e^{-t}=Q_1(t), \quad
\langle\theta^c_t(t)\rangle=\langle\chi_{tt}^c(t)\rangle=-\langle\chi_t^c(t)\rangle= -Q_1(t), \quad t \geq 0. \label{cond}
\ee
In \eqref{cc}, we multiply the first equation by $\tth^c_t$ and  the second equation by $ \mathbf{q}_t^c$. Adding the resulting equations
together, we get
\be
\frac12\frac{d}{dt}(\| \tth^c_t\|^2+\|\mathbf{q}^c_t\|^2)+\|\mathbf{q}_t^c\|^2=-\int_\Omega \tch^c_{tt} \tth^c_t dx.\label{cca}
\ee
Multiplying the third equation of \eqref{c} by $A_0^{-1}\tch_{tt}^c$ an integrating over $\Omega$, we obtain
\bea
&&\frac{d}{dt}\left(\frac12\|A_0^{-\frac12}\tch_{tt}^c\|^2+\frac12\|\nabla\tch_t^c\|^2 +\frac12\|\nabla_\Gamma\txi^c_t\|_{H_\Gamma}^2
+\frac12\int_\Omega \hat{f}'(\chi^c)(\tch_t^c)^2 dx +\frac12\int_\Gamma \hat{g}'(\xi^c)(\txi_t^c)^2 dS\right)\non\\
&&  +\|A_0^{-\frac12}\tch_{tt}^c\|^2+\alpha \|\tch_{tt}^c\|^2+\|\txi_{tt}^c\|_{H_\Gamma}^2\non\\
&=&\frac12\int_\Omega \hat{f}''(\chi^c)\chi^c_t(\tch^c_{t})^2dx+\frac12\int_\Gamma \hat{g}''(\xi^c)\xi^c_t(\txi^c_{t})^2dS
+\gamma_0\int_\Omega \tch_t \tch^c_{tt} dx+ \gamma_1\int_\Gamma \xi_t \txi^c_{tt}dS\non\\
&& -Q_1\int_\Omega \hat{f}'(\chi^c)\tch^c_{tt}dx-Q_1\int_\Gamma \hat{g}'(\xi^c)\txi^c_{tt}dS+\int_\Omega \tch^c_{tt} \tth^c_t dx
+ Q_1\int_\Gamma \txi_{tt}^c dS.\label{ccb}
\eea
On the other hand, multiplying the third equation of \eqref{c} by $A_0^{-1}\tch_{t}^c$, after an integration by parts we get
\bea
&&\frac{d}{dt}\left( \int_\Omega A_0^{-\frac12} \tch_{tt}^cA_0^{-\frac12}\tch_t^cdx +\frac12\|A_0^{-\frac12}\tch^c_{t}\|^2
+\frac{\alpha}{2}\|\tch^c_t\|^2+\frac12\|\txi^c_t\|^2_{H_\Gamma}\right)\non\\
&& -\|A_0^{-\frac12}\tch_{tt}^c\|^2+ \|\nabla \tch_t^c\|^2 +\int_\Omega \hat{f}'(\chi^c)(\tch^c_t)^2 dx
+\|\nabla _\Gamma \txi^c_t\|^2_{H_\Gamma}+\int_\Gamma \hat{g}'(\xi^c)(\xi^c_t)^2 dS\non\\
&=& Q_1\int_\Omega \hat{f}'(\chi^c)\tch^c_t dx+\int_\Omega \tth^c_t\tch^c_t dx+\gamma_0\int_\Omega \tch_t\tch^c_t dx\non\\
&& +Q_1\int_\Gamma \hat{g}(\xi^c)\txi^c_t dS+Q_1\int_\Gamma \txi ^c_t dS+\gamma_1\int_\Gamma \xi_t \txi^c_t dS. \label{ccc}
\eea
On the other hand, we have
\bea
&& \frac{d}{dt}\int_\Omega \mathbf{q}^c_t\cdot \nabla A_0^{-1}\tth^c_t dx+\|\tth^c_t\|^2
\non\\
&=&\int_\Omega \mathbf{q}^c_{tt} \cdot  \nabla A_0^{-1}\tth^c_t dx+ \int_\Omega \mathbf{q}^c_t \cdot   \nabla A_0^{-1}\tth^c_{tt} dx+\|\tth^c_t\|^2\non\\
&=& -\int_\Omega \mathbf{q}^c_t\cdot \nabla A_0^{-1} \tth^c_t dx -\int_\Omega \mathbf{q}^c_t\cdot \nabla A_0^{-1}\tch^c_{tt}dx
+\|A_0^{-\frac12}\nabla \cdot \mathbf{q}^c_t\|^2.
\label{ccd}
\eea
Multiplying \eqref{ccc} by $\kappa_1>0$ and \eqref{ccd} by $\kappa_2>0$, respectively, and adding the results with \eqref{cca} and \eqref{ccb},
then we obtain, for any $t\geq 0$,
\be
\frac{d}{dt}\mathcal{Y}^c(t)+ \mathcal{I}^c(t)\leq \mathcal{R}^c(t),\label{yc}
\ee
where
\bea
\mathcal{Y}^c&=&\frac12(\| \tth^c_t\|^2+\|\mathbf{q}^c_t\|^2)+\frac12\|A_0^{-\frac12}\tch_{tt}^c\|^2+\frac12\|\nabla\tch_t^c\|^2
+\frac12\|\nabla_\Gamma\txi^c_t\|_{H_\Gamma}^2\non\\
&& +\frac12\int_\Omega \hat{f}'(\chi^c)(\tch_t^c)^2 dx +\frac12\int_\Gamma \hat{g}'(\xi^c)(\txi_t^c)^2 dS
+\kappa_2\int_\Omega \mathbf{q}^c_t\cdot \nabla A_0^{-1}\tth^c_t dx\non\\
&& +\kappa_1\int_\Omega A_0^{-\frac12} \tch_{tt}^cA_0^{-\frac12}\tch_t^cdx +\frac{\kappa_1}{2}\|A_0^{-\frac12}\tch^c_{t}\|^2
+\frac{\kappa_1\alpha}{2}\|\tch^c_t\|^2+\frac{\kappa_1}{2}\|\txi^c_t\|^2_{H_\Gamma},\non
\eea
\bea
\mathcal{I}^c&=& \|\mathbf{q}_t^c\|^2+ (1-\kappa_1) \|A_0^{-\frac12}\tch_{tt}^c\|^2+\alpha \|\tch_{tt}^c\|^2+\|\txi_{tt}^c\|_{H_\Gamma}^2
+ \kappa_1\|\nabla \tch_t^c\|^2\non\\
&& +\kappa_1\int_\Omega \hat{f}'(\chi^c)(\tch^c_t)^2 dx +\kappa_1\|\nabla _\Gamma \txi^c_t\|^2_{H_\Gamma}
+\kappa_1\int_\Gamma \hat{g}'(\xi^c)(\xi^c_t)^2 dS+\kappa_2\|\tth^c_t\|^2\non\\
&& +\kappa_2\int_\Omega \mathbf{q}^c_t\cdot \nabla A_0^{-1} \tth^c_t dx +\kappa_2\int_\Omega \mathbf{q}^c_t\cdot \nabla A_0^{-1}\tch^c_{tt}dx
-\kappa_2\|A_0^{-\frac12}\nabla \cdot \mathbf{q}^c_t\|^2,\non
\eea
and
\bea
\mathcal{R}^c&=&\frac12\int_\Omega \hat{f}''(\chi^c)\chi^c_t(\tch^c_{t})^2dx+\frac12\int_\Gamma \hat{g}''(\xi^c)\xi^c_t(\txi^c_{t})^2dS
+\gamma_0\int_\Omega \tch_t \tch^c_{tt} dx\non\\
&& + \gamma_1\int_\Gamma \xi_t \txi^c_{tt}dS-Q_1\int_\Omega \hat{f}'(\chi^c)\tch^c_{tt}dx-Q_1\int_\Gamma \hat{g}'(\xi^c)\txi^c_{tt}dS
+ Q_1\int_\Gamma \txi_{tt}^c dS\non\\
&&+\kappa_1 Q_1\int_\Omega \hat{f}'(\chi^c)\tch^c_t dx+\kappa_1\int_\Omega \tth^c_t\tch^c_t dx+\kappa_1\gamma_0\int_\Omega \tch_t\tch^c_t dx\non\\
&& +\kappa_1Q_1\int_\Gamma \hat{g}(\xi^c)\txi^c_t dS+\kappa_1Q_1\int_\Gamma \txi ^c_t dS+\kappa_1\gamma_1\int_\Gamma \xi_t \txi^c_t dS.\non
\eea
Since $\hat{f}',\, \hat{g}' \geq 1$, taking $\kappa_1, \,\kappa_2$ small enough, we still have
\be
\eta_1^{-1}\|(\tth_t^c, \mathbf{q}^c_t, \tch^c_t, \txi^c_t, \tch^c_{tt})\|^2_{\mathbb{X}}\leq \mathcal{Y}^c
\leq \eta_1\|(\tth_t^c, \mathbf{q}^c_t, \tch^c_t, \txi^c_t, \tch^c_{tt})\|^2_{\mathbb{X}},\label{equb}
\ee
and
\be
\mathcal{I}^c\geq C_1(\|(\tth_t^c, \mathbf{q}^c_t, \tch^c_t, \txi^c_t, \tch^c_{tt})\|^2_{\mathbb{X}})+\frac{\alpha}{2} \|\tch_{tt}^c\|^2
+\frac12\|\txi_{tt}^c\|_{H_\Gamma}^2.\non
\ee
Let us proceed to estimate $\mathcal{R}^c(t)$. Using the uniform bounds \eqref{disa1}, \eqref{ec1} and \eqref{ec2},
on account of H\"older's inequality and Young's inequality, we infer
\bea
\mathcal{R}^c &\leq& C\|\hat{f}''(\chi^c)\|_{L^6(\Omega)}\|\chi^c_t\|\|\tch^c_t\|_{L^6(\Omega)}^2
+C\|\hat{g}''(\xi^c)\|_{L^6(\Gamma)}\|\xi^c_t\|_{H_\Gamma}\|\txi^c_t\|^2_{L^6(\Gamma)} \non\\
&& +\gamma_0\|\tch_{tt}^c\|\|\tch_t\| +\gamma_1\|\txi_{tt}^c\|_{H_\Gamma}\|\xi_t\|_{H_\Gamma}\non\\
&&+Ce^{-t}\| \hat{f}'(\chi^c)\|\|\tch^c_{tt}\|
+Ce^{-t}\| \hat{g}'(\xi^c)\|_{H_\Gamma}\|\txi^c_{tt}\|_{H_\Gamma}  +Ce^{-t}\|\txi^c_{tt}\|_{H_\Gamma}\non\\
&&  + C\kappa_1 e^{-t}\|\hat{f}'(\chi^c)\|\|\tch^c_t\| +\kappa_1\|\tth^c_t\|\|\tch^c_t\|+\kappa_1\gamma_0\| \tch_t\|\|\tch^c_t\| \non\\
 && +C\kappa_1 e^{-t}\| \hat{g}(\xi^c)\|_{H_\Gamma}\|\txi^c_t\| _{H_\Gamma}+ C\kappa_1e^{-t}\| \txi^c_t\|_{H_\Gamma}
 +\kappa_1\gamma_1\| \xi_t \|_{H_\Gamma}\|\txi^c_t\|_{H_\Gamma},\non
\eea
and then, for any $t \geq 0$,
\bea
\mathcal{R}^c(t) &\leq& \frac{\alpha}{4} \|\tch_{tt}^c(t)\|^2+\frac14\|\txi_{tt}^c(t)\|_{H_\Gamma}^2+\frac{\kappa_2}{2}\|\tth_t^c(t)\|^2 \non\\
 && +(\lambda+C\lambda^{-1}\|\chi^c_t(t)\|^2)\|\tch^c_t(t)\|_{V}^2+(\lambda+C\lambda^{-1}\|\xi^c_t(t)\|^2_{H_\Gamma})\|\txi^c_t(t)\|^2_{V_\Gamma}\non\\
 && + C\|\tch_t(t)\|^2+C\|\tch^c_t(t)\|^2+C\|\xi_t(t)\|_{H_\Gamma}^2+C\|\txi^c_{t}(t)\|_{H_\Gamma}^2+Ce^{-2t}.\non
 \eea
\color{black}
Taking $\lambda>0$ small enough, from \eqref{yc} we infer, for any $t \geq 0$,
\bea
\frac{d}{dt}\mathcal{Y}^c(t)+K_1\mathcal{Y}^c(t)
&\leq&  K_2(\|\tch^c_t(t)\|^2+\|\txi^c_{t}(t)\|_{H_\Gamma}^2)\mathcal{Y}^c(t)+Ce^{-2t}\non\\
&&  + \, C\|\tch_t(t)\|^2+C\|\tch^c_t(t)\|^2+C\|\xi_t(t)\|_{H_\Gamma}^2+C\|\txi^c_{t}(t)\|_{H_\Gamma}^2.\non
\eea
Recalling \eqref{sss}, we can apply the Gronwall-type lemma (see e.g., \cite[Lemma 2.2]{GP}) again to conclude that
\be
\mathcal{Y}^c(t)\leq C\mathcal{Y}^c(0)e^{-\frac{K_1}{2}t}+C, \quad \forall\, t\geq 0,\non
\ee
which, together with \eqref{cond} and \eqref{equb}, yields the uniform estimate of $(\theta^c_t, \mathbf{q}^c_t, \chi^c_t, \xi^c_t, \chi^c_{tt})(t)$ in $\mathbb{X}$.
Besides, on account of  \eqref{ec1} we know that $(\theta^c, \mathbf{q}^c, \chi^c, \xi^c, \chi^c_{t})(t)$ is also
uniformly bounded in $\mathbb{X}$. We now use the same argument as in Lemma \ref{esY} to get higher-order estimate.
From equation \eqref{c} we deduce
\be
\|\nabla \theta^c\|\leq \|\mathbf{q}_t^c\|+\|\mathbf{q}^c\|\leq C, \quad \|\nabla \cdot \mathbf{q}^c\|\leq \|\theta^c_t\|+\|\chi^c_t\|\leq C.\non
\ee
Since
$$ (\nabla \times\mathbf{q}^c)_t(t)+(\nabla \times\mathbf{q}^c)(t)=0 \quad \text{for} \ t>0, \quad  \quad \nabla \times\mathbf{q}^c(0)=\mathbf{0},$$
we have $(\nabla \times\mathbf{q}^c)(t)=\mathbf{0}$ for all $t\geq 0$. Thus, $\|\mathbf{q}^c\|_{\mathbf{H}^1(\Omega)}\leq C$.
Next, we rewrite \eqref{c}(4) and \eqref{c}(6) as follows
\bea
& -\Delta \chi^c=\mu^c-\hat{f}(\chi^c)-\alpha \chi^c_t+\theta^c+\gamma_0 \chi:=h_1,\label{ep1}\\
&  -\Delta_\Gamma \xi^c+\partial_\nu \chi^c+\beta \xi^c =- \xi^c_t - \hat{g}(\xi^c)+ \beta \xi^c+ \gamma_1 \xi:=h_2,\label{ep2}
\eea
where $\beta>0$ is a positive constant.
Since $\mu^c$ satisfies
\be
-\Delta \mu^c= -(\chi^c_{tt}+\chi^c_t) \quad \text {in} \ \Omega\times (0,\infty), \ \quad \partial_\nu\mu^c=0 \quad \text{on} \ \Gamma\times (0,\infty),\non
\ee
we see that
\bea
\|\mu^c\|_{V}&\leq& C(\|\chi^c_{tt}+\chi^c_t\|_{V^*}+|\langle\mu^c\rangle|\non\\
&\leq& C+| \langle\hat{f}(\chi^c)\rangle|+\alpha |\langle \chi^c_t\rangle|+|\langle \theta^c \rangle|+\gamma_0 |\langle \chi \rangle|  \non\\
&& + \frac{1}{|\Omega|} \left(   \left|\int_\Gamma \xi^c_t dS\right|+ \left|\int_\Gamma \hat{g}(\xi^c)dS\right|
+ \gamma_1 \left|\int_\Omega \xi dx\right|\right)\non\\
&\leq& C.\non
\eea
Using estimate \eqref{ec1} and the same argument as in Lemma \ref{esY}, we get
\be
\|(\chi^c(t), \xi^c(t))\|_{\mathbb{H}^3}\leq C, \quad \forall\, t\geq 0.\label{highatt}
\ee
Collecting the estimates above, we see that $(\theta^c, \mathbf{q}^c, \chi^c, \xi^c, \chi^c_{t})(t)$ is uniformly bounded in $\mathbb{Y}$,
which is compactly embedded into $ \mathbb{X}$.

In summary, we have proved that any trajectory starting from $\mathbb{X}$ can be decomposed into two parts: one part decays exponentially fast to zero
in $\mathbb{X}$ and the other part is uniformly bounded in $\mathbb{Y}$. Thus, the trajectory is precompact in $\mathbb{X}$. The proof is complete.
\end{proof}

\textbf{Proof of Theorem \ref{GA}}.
Proposition \ref{abo} implies that the semigroup $S_2(t)$ has a bounded absorbing set in $\mathbb{X}_{M,M'}$.
On the other hand, Proposition \ref{precom} yields the precompactness of the trajectory and, in particular, the existence of a compact (exponentially)
attracting set (cf. \eqref{expatt} and \eqref{highatt}). Then the conclusion of Theorem \ref{GA} follows from a classical result in the general theory of infinite dimensional dynamical systems (see, e.g., \cite[Ch.2, Theorem 2.2]{BV} or \cite[Theorem I.1.1]{Temam}).

\br
\label{smoothattr}
Let us consider the \emph{closed semigroup} $S_1(t)$ associated with weak solutions. The existence of the global attractor can be established within the framework introduced in  \cite[Theorem 2]{PZ07} by proving first the existence of an absorbing set in the phase space
$$\mathbb{Y}_{M, M'}=\{(z_1, \mathbf{z}_2, z_3,z_4,z_5)\in \mathbb{Y}:\ |\langle z_1+z_3\rangle|\leq M,\  |\langle z_3+z_5\rangle|\leq M, \ |z_5|\leq M'\}.$$
Then, thanks to decomposition \eqref{de}, one can construct a positively invariant exponential attracting set $\mathcal{B}$ which is bounded in $\mathbb{Y}_{M, M'}$. Using the same decomposition and taking the initial data in $\mathcal{B}$, it is possible to prove the asymptotic compactness of the semigroup in $\mathbb{Y}$. The global attractor coincides with the previous one, that is, we have a smoothness result for $\mathcal{A}$. The details are left to the interested reader.
\er

\section{Convergence to equilibrium}
\label{sec5}
\setcounter{equation}{0}
In this section, we proceed to investigate the long-time behavior of single weak solution for any given initial datum
$(\theta_0, \mathbf{q}_0, \chi_0, \xi_0, \chi_1)\in \mathbb{Y}$.

\subsection{Stationary problem and {\L}ojasiewicz--Simon inequality}

First, we look at the corresponding stationary problem. The steady states $(\theta_\infty, \chi_\infty, \xi_\infty)$
of problem \eqref{c1}--\eqref{c7} satisfy the following elliptic boundary value problem
\be
\begin{cases}
& -\Delta \theta_\infty=0,\quad \text{in \,} \Omega, \non\\
& -\Delta(-\Delta \chi_\infty+f(\chi_\infty)-\theta_\infty)=0,\quad \text{in \,} \Omega,\non\\
& \partial_\nu \theta_\infty=0,\quad x\in \Gamma, \quad \text{in \,} \Omega,\non\\
& \partial_\nu (-\Delta \chi_\infty+f(\chi_\infty))=0, \quad \text{on \,} \Gamma, \non\\
&  -\Delta_\Gamma \xi_\infty +\partial_\nu \chi_\infty +g(\xi_\infty)=0,\quad \text{on \,} \Gamma, \non\\
& \xi_\infty=\chi_\infty|_\Gamma,\non
\end{cases}
\ee
with constraints dictated by the initial data on account of the boundary conditions
$$
\langle\chi_\infty\rangle=\langle \chi_0+\chi_1\rangle,\quad  \langle\theta_\infty\rangle=\langle\theta_0\rangle-\langle\chi_1\rangle.
$$
It is easy to see that the above system can be reduced to the following form:
\be
 \begin{cases}
 & \theta_\infty=\langle\theta_0\rangle-\langle\chi_1\rangle,\\
 & -\Delta \chi_\infty+f(\chi_\infty)=\mu_\infty,\quad \text{in \,} \Omega,\\
 & -\Delta_\Gamma \xi_\infty +\partial_\nu \chi_\infty +g(\xi_\infty)=0,\quad \text{on \,} \Gamma, \\
 & \xi_\infty=\chi_\infty|_\Gamma,\\
 & \langle\chi_\infty\rangle=\langle \chi_0+\chi_1\rangle,\\
 \end{cases}
  \label{sta}
 \ee
 where $\mu_\infty $ is a constant uniquely determined by
 \be
 \mu_\infty= \langle f(\chi_\infty)\rangle+\frac{1}{|\Omega|}\int_\Gamma g(\xi_\infty)dS.\label{muinf}
 \ee
We introduce  the functional
\be
\Upsilon(u, v)=\frac12\|\nabla u\|^2 + \frac12\| \nabla_\Gamma v\|_{H_\Gamma}^2+\int_\Omega \widehat{F}(u) dx+\int_\Gamma \widehat{G}(v)dS,\label{upsilon}
\ee
for any $(u, v)\in \mathbb{H}_0^1$ (see Section 2), where
\be
\widehat{F}(u)=F(u+\langle \chi_0+\chi_1\rangle), \quad \widehat{G}(v)=G(v+\langle \chi_0+\chi_1\rangle).
\ee
For any $(u,v), (w,w_\Gamma)\in \mathbb{H}_0^1$, we define the operator
\bea
&& (\mathcal{M}(u,v), (w,w_\Gamma))_{(\mathbb{H}_0^1)^*, \mathbb{H}_0^1 }\non\\
&:=& ( \partial \Upsilon(u, v), (w,w_\Gamma))_{(\mathbb{H}_0^1)^*, \mathbb{H}_0^1} \non\\
&=& \int_\Omega (\nabla u\cdot \nabla w + \widehat{f}(u)w )dx
+ \int_\Gamma (\nabla_\Gamma v\cdot \nabla_\Gamma w_\Gamma + \widehat{g}(v) w_\Gamma) dS.\label{MM1}
\eea
If we restrict the operator $\mathcal{M}$ on $\mathbb{H}_0^2$, i.e., for $(u,v)\in
\mathbb{H}_0^2$,  after integration by parts, from \eqref{MM1} we infer
\be
\mathcal{M}(u,v)  =\mathbb{A} + \left(
\begin{array}{ll}   {\rm P_0} \widehat{f}(u)&0 \\
0& \widehat{g}(v)\\
\end{array}
\right).
\label{cwa}
\ee
Here, we denote by ${\rm P_0}$ the projection operator ${\rm P_0}: H\to H_0$ such that ${\rm P_0}u=u-\langle u\rangle$ for any $u\in H$.
The operator $\mathbb{A}$ is given by
\be
\mathbb{A}  =  \left(
\begin{array}{ll}  {\rm P_0}(-\Delta) & 0 \\
\partial_\nu & -\Delta_{\Gamma} \\
\end{array}
\right).
\label{oA}
\ee
From the identities \eqref{MM1}--\eqref{oA} it easily follows
\bp
Suppose that $(\chi, \xi):=(u+\langle \chi_0+\chi_1\rangle,v+\langle \chi_0+\chi_1\rangle)\in \mathbb{H}^1$ with $\langle u\rangle=0$
is a weak solution to problem \eqref{sta}. Then $(u, v)$ is a critical point of the functional $\Upsilon \in \mathbb{H}_0^1$.
Conversely, if $(u, v)$ is a critical point of the functional $\Upsilon \in \mathbb{H}_0^1$, then
$(\chi, \xi):=(u+\langle \chi_0+\chi_1\rangle,v+\langle \chi_0+\chi_1\rangle)$ is a weak solution to problem \eqref{sta}.
\ep
Furthermore, applying the method of minimizing sequence similar to the one used in \cite{WZ04}, we easily prove the following
\bp
Under assumptions (H1)--(H3), the stationary problem \eqref{sta} admits at least one solution $(\chi_\infty, \xi_\infty)\in \mathbb{H}^1$
and $\theta_\infty$ is given by $\theta_\infty=\langle\theta_0\rangle-\langle\chi_1\rangle$ such that
$$
\Upsilon(\chi_\infty-\langle \chi_0+\chi_1\rangle, \xi_\infty-\langle \chi_0+\chi_1\rangle)=\inf_{(u,v)\in \mathbb{H}_0^1} \Upsilon(u,v).
$$
\ep
\br
By the elliptic estimate (cf. e.g., \cite[Lemma A.1, Corollary A.1]{MZ05}), if $(\chi, \xi)\in \mathbb{H}^1$ is a weak solution to problem \eqref{sta},
then $(\chi, \xi)\in \mathbb{H}^s$ $(s\in \mathbb{N})$, provided that $f, g$ are smooth enough.
\er
Next, we introduce a \L ojasiewicz--Simon type inequality which will be used to prove long-time behavior of global solutions to problem \eqref{c1}--\eqref{c7}.
\bl \label{LS}
Assume that $f, g$ are real analytic and (H2), (H3) are satisfied. Let $(u_*, v_*)\in \mathbb{H}_0^2$ be a critical point of the functional $\Upsilon$.
Then there exist two constants $\rho\in (0, \frac12)$ and $\beta>0$, depending on $(u_*, v_*)$, such that, for any $(u, v)\in \mathbb{H}^1_0$ with
$\|(u, v)-(u_*, v_*)\|_{\mathbb{V}_1}<\beta$, we have
\be
\|\mathcal{M}(u,v)\|_{(\mathbb{H}_0^1)^*}\geq |\Upsilon(u,v)-\Upsilon(u_*, v_*)|^{1-\rho}.\label{ls1}
\ee
\el
\begin{proof}
The proof follows from an argument similar to the one used in \cite{SW}. Here, we just point out some differences.
By the assumptions, $\Upsilon$ is twice Fr\'echet differentiable with respect to the topology of $\mathbb{H}^2$.
Moreover, by the Sobolev embedding $H^2(\Omega)\hookrightarrow L^\infty(\Omega)$ $(n\leq 3)$, $\Upsilon$ is real analytic.
As in \cite{RZ03} and using the Poincar\'e inequality, we can easily show that $\mathbb{A}$ is a strictly positive
self-adjoint unbounded operator from
$D(\mathbb{A})=\{(u, v)\in \mathbb{H}_0^1: \ \mathbb{A}(u, v)\in \mathbb{H}_0\}$ into $\mathbb{H}_0$.
Standard spectral theory allows us to define the power $\mathbb{A}^s$ ($s\in \mathbb{R}$), and we infer that
there exists a complete orthonormal family $\{(\phi_j, \psi_j)\} \in D(\mathbb{A})$, ($j\in \mathbb{N}$, $s\in
\mathbb{R}$), as well as a sequence of eigenvalues $0<\lambda_1\leq \lambda_2\leq...$, $\lambda_j\rightarrow \infty$
as $j$ tends to infinity, such that
\be
\mathbb{A}(\phi_j, \psi_j)^T=\lambda_j(\phi_j, \psi_j)^T,\quad j\in \mathbb{N}.
\ee
In particular, $D(\mathbb{A}^\frac12)=\mathbb{H}_0^1$, $D(\mathbb{A})= \mathbb{H}_0^2$.
By a bootstrap argument, we get $(\phi_j, \psi_j)\in C^\infty(\Omega) $, for all $j\in \mathbb{N}$. For any $(u, v)\in \mathbb{H}_0^1$, we have
\be (\mathbb{A}(u, v)^T, (u, v)^T)_{(\mathbb{H}_0^1)^*,\mathbb{H}_0^1}=\int_\Omega |\nabla u|^2 dx+\int_\Gamma |\nabla_\Gamma v|^2 dS
=\|(u, v)\|^2_{\mathbb{H}_0^1}.\label{1q}
\ee
Following the idea used in \cite{HJ99}, we now introduce the orthogonal projector $P_m$ in $\mathbf{V}_0$ onto
$K_m:=span\{(\phi_1, \psi_1)...,(\phi_m, \psi_m)\}\subset C^\infty(\Omega)$. As in \cite{SW}, we have, for any $(u,v)\in \mathbb{H}_0^1$,
\bea (\mathbb{A}(u,v)^T+\lambda_m P_m (u,v)^T, (u,v)^T)_{(\mathbb{H}_0^1)^*,\mathbb{H}_0^1}
&\geq& \frac12 \|(u, v)\|^2_{\mathbb{H}_0^1}+ \frac14 \lambda_m
\|(u, v)\|_{\mathbb{H}_0}^2.\label{AC}
\eea
Next, we consider the following linearized operator on $\mathbb{H}_0^2$
\be
L(u,v) :=\partial \mathcal{M}(u,v)=\mathbb{A} + \left(
\begin{array}{ll}   {\rm P_0} \widehat{f}'(u)&0 \\
0& \widehat{g}'(v)\\
\end{array}
\right).
\label{cw3.1}
\ee
In analogy to \cite[Lemma 2.3]{WZ04}, we can easily show that $L(u,v)$ is self-adjoint on $\mathbb{H}_0$.
We associate with the operator $L(u_*,v_*)$ the following bilinear form
$b((w_1, w_{1\Gamma}), (w_2, w_{2\Gamma}))$ on $\mathbb{H}_0^1$, for any $(w_1, w_{1\Gamma}),(w_2, w_{2\Gamma})\in \mathbb{H}_0^1$,
\bea
&& b((w_1, w_{1\Gamma}), (w_2, w_{2\Gamma})) \non\\
&= & \int_\Omega (\nabla w_1 \cdot \nabla w_2 + \widehat{f}'(u_*)w_1w_2 )dx
+\int_\Gamma \left( \nabla_\Gamma w_{1\Gamma} \cdot \nabla_\Gamma w_{2\Gamma} +  \widehat{g}'(v_*)w_{1\Gamma} w_{2\Gamma}\right)dS.
\label{cw2.8}
\eea
Since $(u_*, v_*)\in \mathbb{V}_2$, then, by the Sobolev embedding theorems, we infer that
 $L(u_*, v_*)+\lambda_m P_m$ is coercive in $\mathbb{H}_0^1$, provided that $\lambda_m$ is sufficiently large, e.g.,
\be
\lambda_m> 4\max\{\|\widehat{f}'(u_*)\|_{L^\infty(\Omega)},
\|\widehat{g}'(v_*)\|_{L^\infty(\Gamma)}\}.\non
\ee
After establishing the above framework, the proof of the extended \L ojasiewicz--Simon inequality \eqref{ls1} can be reproduced taking advantage of
the arguments used in \cite{HJ99} (see also \cite[Theorem 3.1]{SW}) with minor modifications. The details are omitted here.
\end{proof}

\subsection{Convergence to a single equilibrium}
The main result of this section is the following

\begin{theorem}\label{conv}
Assume (H1)--(H3). Then, for any initial datum
 $(\theta_0, \mathbf{q}_0, \chi_0, \xi_0, \chi_1)\in \mathbb{Y}$, the unique global weak solution to problem \eqref{c1}--\eqref{c7} satisfies
 \be
 \label{convergence}
 \lim_{t\to +\infty} \|(\theta, \mathbf{q}, \chi, \xi, \chi)(t)-(\theta_\infty, \mathbf{0}, \chi_\infty, \xi_\infty, 0)\|_{\mathbb{X}}=0,
 \ee
 where $(\theta_\infty, \chi_\infty, \xi_\infty)$ solves the stationary problem \eqref{sta}.
 \end{theorem}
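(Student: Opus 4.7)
The approach combines the energy method with the Łojasiewicz–Simon inequality (Lemma 5.1). I write $c := \langle\chi_0+\chi_1\rangle$, $Q_1(t) = \langle\chi_1\rangle e^{-t}$ and use the shifted variables $\tilde\theta, \tilde\chi, \tilde\xi$ introduced in Section 3.

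\textbf{Step 1 (precompactness and the $\omega$-limit set).} By Lemma 3.2 the orbit $\{(\theta,\mathbf{q},\chi,\xi,\chi_t)(t) : t\geq 0\}$ is uniformly bounded in $\mathbb{Y}$, hence relatively compact in $\mathbb{X}$. Therefore the $\omega$-limit set is non-empty. Together with \eqref{cv3}, which forces $\langle\theta(t)\rangle\to\theta_\infty$, $\langle\chi(t)\rangle\to c$, $\langle\chi_t(t)\rangle\to 0$ exponentially fast, this will show that every cluster point has the form $(\theta_\infty,\mathbf{0},\chi_*,\xi_*,0)$ once Step 2 identifies it as a critical point of $\Upsilon$.

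\textbf{Step 2 (energy identity with exponentially small error).} Testing \eqref{c1a} by $\tilde\theta$, \eqref{c2a} by $\mathbf{q}$, and \eqref{c3a} by $A_0^{-1}\tilde\chi_t$, and summing, I expect to obtain
\begin{equation*}
\frac{d}{dt}\mathcal{E}(t) + \mathcal{D}(t) = \mathcal{R}(t), \qquad |\mathcal{R}(t)|\leq C e^{-t},
\end{equation*}
where
\begin{equation*}
\mathcal{E}(t) = \tfrac12\|\tilde\theta\|^2+\tfrac12\|\mathbf{q}\|^2+\tfrac12\|A_0^{-\frac12}\tilde\chi_t\|^2+\Upsilon(\chi-c,\xi-c),
\end{equation*}
and $\mathcal{D}(t) \geq c_0(\|\mathbf{q}\|^2+\|A_0^{-\frac12}\tilde\chi_t\|^2+\alpha\|\chi_t\|^2+\|\xi_t\|_{H_\Gamma}^2)$. (The error $\mathcal{R}$ arises only from the $Q_1$ contributions and from the discrepancy $\langle\chi(t)\rangle-c=O(e^{-t})$ when reconciling $\widehat F(\chi-c)$ with $F(\chi)$.) Integrating in time yields $\int_0^\infty\mathcal{D}(t)\,dt<\infty$ and $\mathcal{E}\in L^\infty$, and shows that along any sequence $t_n\to\infty$ one has $\mathbf{q}(t_n),\chi_t(t_n),\xi_t(t_n)\to 0$ in their respective spaces, confirming that any $\omega$-limit point is an equilibrium and completing Step 1.

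\textbf{Step 3 (bridging $\mathcal{M}$ and $\mathcal{D}$).} For $(w,w_\Gamma)\in\mathbb{H}_0^1$, integration by parts combined with \eqref{c4a} and \eqref{c6a} gives
\begin{align*}
\langle\mathcal{M}(\chi-c,\xi-c),(w,w_\Gamma)\rangle_{(\mathbb{H}_0^1)^*,\mathbb{H}_0^1}
&= \int_\Omega (\mu-\langle\mu\rangle)w\,dx+\int_\Omega\tilde\theta w\,dx \\
&\quad -\alpha\int_\Omega\tilde\chi_t w\,dx-\int_\Gamma(\xi_t+Q_1)w_\Gamma\,dS.
\end{align*}
Since $-\Delta\mu=\chi_{tt}+\chi_t$ with $\partial_\nu\mu=0$, the regularity in \eqref{r4a} together with the uniform $\mathbb{Y}$-bound controls $\|\mu-\langle\mu\rangle\|_{V^*}$ by $\|A_0^{-\frac12}\tilde\chi_t\|$ up to lower-order terms. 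This yields
\begin{equation*}
\|\mathcal{M}(\chi-c,\xi-c)\|_{(\mathbb{H}_0^1)^*}\leq C\bigl(\sqrt{\mathcal{D}(t)}+e^{-t}\bigr).
\end{equation*}

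\textbf{Step 4 (Łojasiewicz argument and conclusion).} Fix any $(\theta_\infty,\mathbf{0},\chi_\infty,\xi_\infty,0)\in\omega(z_0)$ and apply Lemma 5.1 at $(u_*,v_*)=(\chi_\infty-c,\xi_\infty-c)$ to get exponents $\rho\in(0,\tfrac12)$ and radius $\beta>0$. By a standard continuity/contradiction argument (cf.\ \cite{HJ99}), precompactness and monotonicity of $\mathcal{E}$ modulo $\mathcal{R}$ imply that $(\chi-c,\xi-c)(t)$ eventually stays in the $\mathbb{H}^1_0$-ball of radius $\beta$. Inside, combining Lemma 5.1 with Step 3,
\begin{equation*}
|\mathcal{E}(t)-\mathcal{E}_\infty|^{1-\rho}\leq C\bigl(\sqrt{\mathcal{D}(t)}+e^{-t}\bigr).
\end{equation*}
Then $-\tfrac{d}{dt}|\mathcal{E}-\mathcal{E}_\infty|^\rho\geq c\sqrt{\mathcal{D}(t)}-Ce^{-\rho t}$ after absorbing $\mathcal{R}$, so integrating yields $\sqrt{\mathcal{D}}\in L^1(0,\infty)$. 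In particular $\chi_t\in L^1(0,\infty;H)$ and $\xi_t\in L^1(0,\infty;H_\Gamma)$, hence $(\chi,\xi)(t)$ has a strong limit in $\mathbb{H}$; interpolation with the uniform $\mathbb{Y}$-bound upgrades this to $\mathbb{H}^1$. The convergences $\theta(t)\to\theta_\infty$ and $\mathbf{q}(t)\to\mathbf{0}$ in $L^2$ follow from precompactness and uniqueness of the limit, while $\chi_t\to 0$ in $V^*$ follows from $\chi_t\in L^1(0,\infty;H)$ and the uniform bound on $\chi_{tt}$ from \eqref{r3a}.

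\textbf{Main obstacle.} The delicate step is Step 3: the Łojasiewicz functional $\Upsilon$ is defined on $\mathbb{H}_0^1$ and its gradient $\mathcal{M}$ is of stationary type, whereas $\mathcal{D}$ collects the dissipation of a coupled parabolic–hyperbolic system involving $\mathbf{q}$ and the inertial term $\chi_{tt}$. The absence of $H^2$-regularity for $\chi$ (Remark 2.2) prevents testing \eqref{c3} with $\mu$ directly; one must work with the quasi-strong formulation \eqref{we3} and control $\mu-\langle\mu\rangle$ via inverse powers of $A_0$ applied to $\chi_{tt}+\chi_t$. This is why—as anticipated in Remark 1.3—the argument is carried out for weak solutions, which enjoy the extra regularity \eqref{r1a}--\eqref{r4a} needed to make Step 3 rigorous.
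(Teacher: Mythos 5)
Your overall strategy (Lyapunov functional plus \L ojasiewicz--Simon inequality) is the same as the paper's, and Steps 1, 2 and 4 are sound in outline, but Step 3 --- which you yourself flag as the delicate point --- contains a genuine gap that you do not close. Testing $\mathcal{M}(\tch,\txi)$ against $(w,w_\Gamma)$ does produce the term $\int_\Omega(\mu-\langle\mu\rangle)w\,dx$; but since $-\Delta\mu=-(\chi_{tt}+\chi_t)$ with $\partial_\nu\mu=0$, one has $\mu-\langle\mu\rangle=-A_0^{-1}\mathrm{P}_0(\chi_{tt}+\chi_t)$, so $\|\mu-\langle\mu\rangle\|_{V^*}$ carries a contribution $\|A_0^{-\frac32}\mathrm{P}_0\chi_{tt}\|$. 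This is \emph{not} ``lower order'': it is merely bounded by the uniform $\mathbb{Y}$-estimate, whereas the \L ojasiewicz argument requires it to be dominated pointwise in time by $\sqrt{\mathcal{D}(t)}$, and $\chi_{tt}$ simply does not appear in the dissipation obtained by testing with $\tth$, $\mathbf{q}$ and $A_0^{-1}\tch_t$. This is precisely the obstruction created by the inertial term, and it cannot be removed by invoking regularity. The paper's resolution is the Haraux--Jendoubi device: perturb the Lyapunov functional by $\kappa_2\mathcal{G}$ with $\mathcal{G}=\big(A_0^{-1}\tch_t,\,A_0^{-1}\mathrm{P}_0(-\Delta\tch+\widehat{f}(\tch))\big)$; differentiating $\mathcal{G}$ and using \eqref{c3a} generates the missing dissipation term $\tfrac{\kappa_2}{2}\|A_0^{-\frac12}\mathrm{P}_0(-\Delta\tch+\widehat{f}(\tch))\|^2$ in $\mathcal{D}$, and it is this term (together with $\|\txi_t\|_{H_\Gamma}$ and the dynamic boundary condition for the surface part of $\mathcal{M}$) that actually controls $\|\mathcal{M}(\tch,\txi)\|_{(\mathbb{H}_0^1)^*}$ by $\sqrt{\mathcal{D}}$ up to an $e^{-t}$ error.

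A second, related omission: your $\mathcal{E}$ lacks the cross term $\kappa_1\int_\Omega\mathbf{q}\cdot\nabla A_0^{-1}\tth\,dx$, so your dissipation contains no $\|\tth\|^2$ (the basic identity dissipates $\mathbf{q}$ but not $\theta$). You need that term twice: once so that the contribution of $\int_\Omega\tth w\,dx$ to $\mathcal{M}$ is controlled by $\sqrt{\mathcal{D}}$, and again so that the quadratic part $\tfrac12\|\tth\|^2$ of $|\mathcal{E}(t)-\mathcal{E}_\infty|$ can be absorbed into $C\,\mathcal{D}^{\frac{1}{2(1-\rho)}}$. With these two corrections (add the $\mathbf{q}$--$\tth$ cross term to $\mathcal{E}$ and set $\mathcal{H}=\mathcal{E}+\kappa_2\mathcal{G}$), the remainder of your argument goes through essentially as in the paper.
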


 \br
 Recalling Remark \ref{smoothattr}, it can be shown that the solution converges in $\mathbb{Y}-$norm to the single equilibrium.
 \er

 The proof of Theorem \ref{conv} consists of several steps.

\textit{Step 1.} \textbf{Characterization of the $\omega$-limit set}. We define the $\omega$-limit set in $\mathbb{X}$ by
\bea
&&\omega(\theta_0, \mathbf{q}_0, \chi_0, \xi_0, \chi_1)\non\\
&=& \{(\theta^*, \tq^*, \chi_0^*, \xi^*, \chi_1^*): \; \exists \{t_n\}\nearrow +\infty,
\|(\theta, \mathbf{q}, \chi, \xi, \chi_t)(t_n)-(\theta^*, \tq^*, \chi_0^*, \xi^*, \chi_1^*)\|_{\mathbb{X}}\to 0\}.\non
\eea
Then we have
\bp
\label{oset}
Suppose that (H1)--(H3) are satisfied.
For any $(\theta_0, \mathbf{q}_0, \chi_0, \xi_0, \chi_1 )\in \mathbb{X}$, the set $\omega(\theta_0, \mathbf{q}_0, \chi_0, \xi_0, \chi_1 )$ is
non-empty, compact and connected in the strong topology of $\mathbb{X}$.
Moreover,
\be
\omega(\theta_0, \mathbf{q}_0, \chi_0, \xi_0, \chi_1)=\{(\theta_\infty, \mathbf{0}, \chi_\infty, \xi_\infty, 0)\}, \label{ome}
\ee
where $(\theta_\infty, \chi_\infty, \xi_\infty)$ is a solution to \eqref{sta}. Besides, the functional $\Upsilon$ (cf. \eqref{upsilon})
is constant on the $\omega$-limit set.
\ep
\begin{proof}
Due to the uniform estimate \eqref{eesY} for the weak solution  $(\theta(t), \mathbf{q}(t), \chi(t), \xi(t), \chi_t(t))$
in $\mathbb{Y}$ (see Lemma \ref{esY}), the first conclusion follows from the general results in the theory of infinite dimensional
dynamical systems (cf. \cite{Temam}).
Next, we prove \eqref{ome}. Introduce the functional
 \bea \mathcal{E}(t) &=& \frac12(\|\tth(t)\|^2+\|\mathbf{q}(t)\|^2+\|\nabla \tch(t)\|^2+\|\nabla_\Gamma \txi(t)\|_{H_\Gamma}^2+\|\tch_t(t)\|_{V^*}^2)\non\\
 && \ +\int_\Omega F(\tch(t)+\langle \chi_0+\chi_1\rangle) dx+\int_\Gamma G(\txi(t)+\langle \chi_0+\chi_1\rangle)dS\non\\
&& + \kappa_1 \int_\Omega \mathbf{q}(t)\cdot \nabla A_0^{-1}\tth (t) dx\non\\
&=& \Upsilon(\tch (t), \txi (t))+\frac12(\|\tth(t)\|^2+\|\mathbf{q}(t)\|^2+\|\tch_t(t)\|_{V^*}^2)
+ \kappa_1 \int_\Omega \mathbf{q}(t)\cdot \nabla A_0^{-1}\tth (t)dx,\non
\eea
where $\kappa_1$ is a sufficiently small positive constant.
Similarly to the calculations performed in Section 3, we deduce that
\bea
&& \frac{d}{dt}\mathcal{E} + \|\mathbf{q}\|^2+\|\tch_t\|^2_{V^*}+\alpha\|\tch_t\|^2+\|\txi_t\|^2_{H_\Gamma}+\kappa_1\|\tth\|^2\non\\
&=& -Q_1\int_\Gamma \txi_t dS +\int_\Omega (f(\tch+\langle \chi_0+\chi_1\rangle)-f(\chi))\tch_t dx\non\\
&& +\int_\Gamma (g(\txi+\langle \chi_0+\chi_1\rangle)-g(\xi)) \txi_t dS\non\\
&& -\kappa_1\int_\Omega \mathbf{q}\cdot \nabla A_0^{-1} \tth dx -\kappa_1\int_\Omega \mathbf{q}\cdot \nabla A_0^{-1}\tch_tdx
+\kappa_1\|A_0^{-\frac12}\nabla \cdot \mathbf{q}\|^2.\label{dee}
\eea
Recalling \eqref{cv3} and \eqref{cv4}, on account of the growth assumption (H3), the uniform estimate \eqref{disa1} in $\mathbb{X}$ and the Sobolev embedding theorems, then it follows
\bea
-Q_1(t)\int_\Gamma \txi_t (t) dS&\leq& \frac14\|\txi_t(t)\|^2_{H_\Gamma}+ Ce^{-2t},\non\\
\int_\Omega (f(\tch (t)+\langle \chi_0+\chi_1\rangle)-f(\chi)(t))\tch_t (t)dx
&\leq& C|Q_1(t)|\|\tch_t(t)\|\leq \frac{\alpha}{2}\|\tch_t(t)\|^2+Ce^{-2t},\non\\
\int_\Gamma (g(\txi(t)+\langle \chi_0+\chi_1\rangle)-g(\xi(t))) \txi_t(t) dS
&\leq& C|Q_1(t)|\|\txi_t(t)\|_{H_\Gamma}\non\\
&\leq&\frac14\|\txi_t(t)\|^2_{H_\Gamma}+Ce^{-2t}.\non
\eea
The last three terms on the right-hand side of \eqref{dee} can be estimated as in \eqref{kk}. Taking $\kappa_1$ sufficiently small, we can find a
constant $C_0$, depending on the $\mathbb{X}$-norm of the initial datum, $\alpha$, $|\Omega|$ and $|\Gamma|$, such that
\be
\frac{d}{dt}\mathcal{E}(t)+\frac12\|\mathbf{q}(t)\|^2+\frac12\|\tch_t(t)\|^2_{V^*}+\frac{\alpha}{2}\|\tch_t(t)\|^2
+\frac12\|\txi_t(t)\|^2_{H_\Gamma}+\frac{\kappa_1}{2}\|\tth(t)\|^2\leq C_0e^{-2t}, \quad \forall\, t\geq 0.\label{LYA}
\ee
Then, for any $t'\leq t$, we have
\bea
&& \mathcal{E}(t')+\frac12 \int_t^{t'} \|\mathbf{q}(s)\|^2+\|\tch_t(s)\|^2_{V^*}+\alpha\|\tch_t(s)\|^2+\|\txi_t(s)\|^2_{H_\Gamma}+\kappa_1\|\tth(s)\|^2 ds\non\\
&\leq& \mathcal{E}(t)+ C_0\int_t^{t'} e^{-2s} ds.\non
\eea
From Remark \ref{AH}(1) it follows that $\mathcal{E}$ is continuous on $\mathbb{X}$. Due to (H2), $\mathcal{E}$ is bounded from below by a constant.
As a consequence, for some constant $\mathcal{E}_\infty$, it holds
$$\lim_{t\to+\infty}\mathcal{E}(t)=\mathcal{E}_\infty.$$
On the other hand, we  infer from \eqref{LYA} that
\be
\int_0^{+\infty} \Big( \|\tth(t)\|^2+\|\mathbf{q}(t)\|^2+\|\tch_t(t)\|^2_{V^*}+\alpha\|\tch_t(t)\|^2+\|\txi_t(t)\|^2_{H_\Gamma} \Big) dt<+\infty.\label{kkk}
\ee
From the integral control \eqref{kkk}, on account of \eqref{r2}, \eqref{r4}, \eqref{cv3}, we easily deduce
\be
\lim_{t\to+\infty}\|\tth(t)\|=0, \quad \lim_{t\to+\infty}\|\mathbf{q}(t)\|=0, \quad \lim_{t\to+\infty}\|\chi_t(t)\|_{V^*}=0.\label{convt}
\ee
Consequently, any point in $\omega(\theta_0, \mathbf{q}_0, \chi_0, \xi_0, \chi_1)$ is of the form $(\theta_\infty, \mathbf{0}, \chi_\infty, \xi_\infty, 0)$
and we have
\be
\lim_{t\to +\infty}\Upsilon(\tch(t), \txi(t))=\mathcal{E}_\infty.\label{limUP}
\ee
Let $\{t_n\}$ be an unbounded sequence such that $t_{n+1}\geq t_{n}+1$ and
$$ \lim_{t_n\to +\infty} \|(\theta, \mathbf{q}, \chi, \xi, \chi_t)(t_n)-(\theta_\infty, \mathbf{0}, \chi_\infty, \xi_\infty, 0)\|_{\mathbb{X}}=0.$$
We show that $(\theta_\infty, \chi_\infty, \xi_\infty)$ is a solution to the stationary problem \eqref{sta}.
From \eqref{convt}, \eqref{cv1} and \eqref{cv3} it is easy to see that $\theta_\infty=\langle\theta_0\rangle-\langle\chi_1\rangle$.
Next, for any $n$, we denote $(\chi_n(s), \xi_n(s)):=(\chi(t_n+s), \xi(t_n+s))$. When $t_n\to +\infty$, from \eqref{kkk} we deduce
$$
\int_0^1\Big( \|\partial_s\tch_n(s)\|^2_{V^*}+\|\partial_s\txi_n(s)\|^2_{H_\Gamma}\Big) ds <+\infty.
$$
As a result,
$$ \|\tch_n(s_1)-\tch(s_2)\|_{V^*}\to 0, \ \ \|\txi_n(s_1)-\txi_n(s_2)\|_{H_\Gamma}\to 0, \quad \text{uniformly for} \ s_1, s_2\in [0,1].
$$
Combining it with \eqref{cv3} and the precompactness of the trajectory (cf. Proposition \ref{precom}), we infer
\be
(\chi_n(s), \xi_n(s))\to (\chi_\infty, \xi_\infty), \quad \text{strongly in}\ \mathbb{H}^1,\non
\ee
which further yields
\be
\mu_n(s):=\mu(\chi(t_n+s), \theta(t_n+s))\to \mu(\chi_\infty, \theta_\infty), \quad \text{strongly in}\ V^*.\non
\ee
Then, for any $\phi\in D(A_0^\frac12)$, we have
\bea
 && (\mu(\chi_\infty, \theta_\infty), \phi)_{V^*, V}\non\\
 &=& \int_0^1 (\mu(\chi_\infty, \theta_\infty), \phi)_{V^*, V} ds\non\\
 &=& \lim_{n\to +\infty} \int_0^1 (\mu_n(s), \phi)_{V^*, V} ds\non\\
 &=& -\lim_{n\to +\infty} \int_0^1 (A_0^{-1}(\partial_{ss}\chi_{n}(s)+\partial_s\chi_{n}(s)), \phi)_{V^*, V} ds\non\\
 &=& -\lim_{n\to +\infty} \int_0^1 (A_0^{-1}(\partial_s\chi_{n}(s)-\langle\partial_s\chi_{n}(s)\rangle), \phi)_{V^*, V} ds\non\\
 &&  - \lim_{n\to +\infty}  (A_0^{-1}( \chi_t(t_n+1)-\chi_t(t_n)-\langle \chi_t(t_n+1)\rangle+\langle \chi_t(t_n)\rangle), \phi)_{V^*, V}\non\\
 &=& 0,
\eea
which implies that there exists a constant $\tilde{\mu}_\infty$ such that
\be
\mu(\chi_\infty, \theta_\infty)=\tilde{\mu}_\infty.
\ee
Next, for any $(u, v)$ in $\mathbb{H}^1$, we have
\bea
&& (\tilde{\mu}_\infty, u)_{V^*, V}\non\\
&=&(\mu(\chi_\infty, \theta_\infty), u)_{V^*, V}\non\\
&=& \lim_{n\to+\infty}\int_0^1 (\nabla \chi_n(s), \nabla u)+(\nabla_\Gamma \xi_n(s), \nabla_\Gamma v)_{H_\Gamma}
+\alpha(\partial_s \chi_n(s), u)+ (\partial_s\xi_n(s), v)_{H_\Gamma} ds\non\\
&& +\lim_{n\to+\infty} \int_0^1 (f(\chi_n(s)), u)+(g(\xi_n(s)), v)_{H_\Gamma}-(\theta_n(s), u) ds\non\\
&=& (\nabla \chi_\infty, \nabla u)+(\nabla_\Gamma \xi_\infty, \nabla_\Gamma v)_{H_\Gamma}+ (f(\chi_\infty), u)
+(g(\xi_\infty), v)_{H_\Gamma}-(\theta_\infty, u)\non\\
&& +\lim_{t_n\to+\infty} \alpha(\chi(t_n+1)-\chi(t_n), u)+ (\xi(t_n+1)-\xi(t_n), v)_{H_\Gamma}\non\\
&=& (\nabla \chi_\infty, \nabla u)+(\nabla_\Gamma \xi_\infty, \nabla_\Gamma v)_{H_\Gamma}+ (f(\chi_\infty), u)
+(g(\xi_\infty), v)_{H_\Gamma}-(\theta_\infty, u). \label{muin}
 \eea
Thus, we can see that $(\chi_\infty, \xi_\infty, \theta_\infty)$ satisfies the stationary problem \eqref{sta} (in the weak form).
Simply taking $u=v=1$ in \eqref{muin}, we deduce that $\tilde{\mu}_\infty+\theta_\infty=\mu_\infty$ and \eqref{muinf} holds. Finally, \eqref{limUP} implies that the functional $\Upsilon$ is constant on the $\omega$-limit set. The proof is complete.
\end{proof}

\textit{Step 2.} \textbf{Convergence to equilibrium}. In the spirit of  \cite{HJ99,GPS2}, we now consider the functional
\be
\mathcal{G}=( A_0^{-1} \tilde{\chi}_t, A_0^{-1} ( {\rm P_0} ( -\Delta \tilde{\chi} + \widehat{f}(\tilde{\chi}))),\non
\ee
which, by the decay property \eqref{convt} and the uniform estimate \eqref{disa1}, satisfies
$$\lim_{t\to+\infty}\mathcal{G}(t)=0.$$
On the other hand, from \eqref{c3a} we deduce
\be
A_0^{-1}\tch_{tt}+ A_0^{-1}\tch_t +\alpha \tch_t+{\rm P_0}( -\Delta \tch+\widehat{f}(\tch))=\tilde{\theta}+ {\rm P_0}(\widehat{f}(\tch)-f(\chi)).\non
\ee
 Then, using the above relation, we compute
\bea
\frac{d}{dt} \mathcal{G}&=& ( A_0^{-1} \tilde{\chi}_{tt}, A_0^{-1} ( {\rm P_0} ( -\Delta \tilde{\chi}
 + \widehat{f}(\tilde{\chi}))))+( A_0^{-1} \tilde{\chi}_t, A_0^{-1} ( {\rm P_0} ( -\Delta \tilde{\chi}_t + \widehat{f}'(\tilde{\chi})\tch_t)))\non\\
 &=& - ( A_0^{-1} \tilde{\chi}_{t}, A_0^{-1} ( {\rm P_0} ( -\Delta \tilde{\chi}+ \widehat{f}(\tilde{\chi}))))
 -\alpha(\tilde{\chi}_{t}, A_0^{-1} ( {\rm P_0} ( -\Delta \tilde{\chi}+ \widehat{f}(\tilde{\chi}))))\non\\
 && -\| A_0^{-\frac12} ( {\rm P_0} ( -\Delta \tilde{\chi} + \widehat{f}(\tilde{\chi})))\|^2
 + (\tth, A_0^{-1} ( {\rm P_0} ( -\Delta \tilde{\chi} + \widehat{f}(\tilde{\chi})))\non\\
 && +({\rm P_0}(\widehat{f}(\tch)-f(\chi)),  A_0^{-1} ( {\rm P_0} ( -\Delta \tilde{\chi} + \widehat{f}(\tilde{\chi}))))
 +\|A_0^{-\frac12} \tch_t\|^2)\non\\
 && +( A_0^{-1} \tilde{\chi}_t, A_0^{-1} ( {\rm P_0} (\widehat{f}'(\tilde{\chi})\tch_t)).\label{GG}
\eea
From the uniform estimate \eqref{disa1} (cf. Lemma \ref{es}) and the Sobolev embedding theorem we infer
\bea
&& |({\rm P_0}(\widehat{f}(\tch)(t)-f(\chi)(t)),  A_0^{-1} ( {\rm P_0} ( -\Delta \tilde{\chi}(t)+ \widehat{f}(\tilde{\chi}(t)))))|\non\\
&\leq& \frac18  \| A_0^{-\frac12} ( {\rm P_0} ( -\Delta \tilde{\chi}(t) + \widehat{f}(\tilde{\chi}(t))))\|^2
+\|A_0^{-\frac12}{\rm P_0}(\widehat{f}(\tch(t))-\widehat{f}(\tch (t)-\langle \chi_1\rangle e^{-t}))\|^2\non\\
&\leq& \frac18  \| A_0^{-\frac12} ( {\rm P_0} ( -\Delta \tilde{\chi}(t) + \widehat{f}(\tilde{\chi}(t))))\|^2+C e^{-2t},\non
\eea
and
\be
| ( A_0^{-1} \tilde{\chi}_t, A_0^{-1} ( {\rm P_0} (\widehat{f}'(\tilde{\chi})\tch_t)))|\leq C\|A_0^{-\frac12}\tch_t\|^2.\non
\ee
The remaining terms on the right-hand side of \eqref{GG} are easy to handle. Then we have
\bea
\frac{d}{dt} \mathcal{G}(t) \ \leq \ -\frac12 \| A_0^{-\frac12} ( {\rm P_0} ( -\Delta \tilde{\chi}(t) + \widehat{f}(\tilde{\chi}(t))))\|^2
+ C\|A_0^{-\frac12}\tch_t(t)\|^2+C\|\tth(t)\|^2+C e^{-2t}.\label{dGG}
\eea
Let $\kappa_2>0$ be sufficiently small (and possibly depending on $\kappa_1$). We define the functional
\be
\mathcal{H}(t)=\mathcal{E}(t)+\kappa_2\mathcal{G}(t), \quad t\geq 0.\non
\ee
It is easy to see that
$$
\lim_{t\to+\infty}\mathcal{H}(t)=\mathcal{E}_\infty.
$$
We infer from \eqref{LYA} and \eqref{dGG} that
\be
\frac{d}{dt} \mathcal{H}(t)+ \mathcal{D}(t)\leq C_1 e^{-2t},\label{dHH}
\ee
where
\bea
\mathcal{D}(t)&=&\frac12\|\mathbf{q}(t)\|^2+\frac14\|\tch_t(t)\|^2_{V^*}+\frac{\alpha}{2}\|\tch_t(t)\|^2+\frac12\|\txi_t(t)\|^2_{H_\Gamma}
+\frac{\kappa_1}{4}\|\tth(t)\|^2\non\\
&& + \frac{\kappa_2}{2} \| A_0^{-\frac12} ( {\rm P_0} ( -\Delta \tilde{\chi}(t) + \widehat{f}(\tilde{\chi}(t))))\|^2+e^{-2t}.
\eea

For every point $(\theta_\infty, \mathbf{0}, \chi_\infty, \xi_\infty, 0)$ belonging to the $\omega$-limit set,
we set $\chi_\infty^*=\chi_\infty-\langle \chi_\infty\rangle$, $\xi^*_\infty=\xi_\infty-\langle \chi_\infty\rangle$.
We can associate to $(\chi_\infty^*, \xi_\infty^*)$ the numbers $\rho, \beta$ (depending on $(\chi_\infty^*, \xi_\infty^*)$)
given by Lemma \ref{LS}. Then we obtain the covering
$$
\omega(\theta_0, \mathbf{q}_0, \chi_0, \xi_0, \chi_1)\subset \{\theta_\infty\}\times\{\mathbf{0}\}\times \bigcup \mathbf{B}((\chi_\infty, \xi_\infty), \beta) \times \{0\}.
$$
Due to the precompactness of the trajectory in $\mathbb{X}$, we can extract a finite subcovering of the $\omega$-limit set such that
$$
\omega(\theta_0, \mathbf{q}_0, \chi_0, \xi_0, \chi_1)
\subset \{\theta_\infty\}\times\{\mathbf{0}\}\times\bigcup_{i=1}^{m} \mathbf{B}((\chi_\infty^{(i)}, \xi_\infty^{(i)}), \beta^{(i)}) \times \{0\}.
$$
Taking $\rho=\min_{i=1}^m\{\rho^{(i)}\}\in (0, \frac12)$, we infer that the extended \L ojasiewicz--Simon inequality \eqref{ls1} holds with the
uniform constant $\rho$.  From the definition of the $\omega$-limit set, we know that there exists a sufficient large $t_0$ such that
$$(\tilde{\chi}(t), \txi(t))\in \mathcal{U}:=
\bigcup_{i=1}^m \mathbf{B}((\chi_\infty^{(i)}-\langle\chi_0+\chi_1\rangle, \xi_\infty^{(i)}-\langle\chi_0+\chi_1\rangle), \beta^{(i)} ),\quad \forall\, t\geq t_0.$$
 As a result,  from Lemma \ref{LS}  and \eqref{limUP} we deduce, for all $t\geq t_0$,
\be
\|\mathcal{M}(\tilde{\chi}(t),\txi(t))\|_{(\mathbb{H}_0^1)^*}\geq |\Upsilon(\tilde{\chi}(t),\txi(t))-\Upsilon_\infty|^{1-\rho}.\label{lsa}
\ee
Here, we recall that $\Upsilon$ is constant on the $\omega$-limit set and we denote it by $\Upsilon_\infty$.
On the other hand, if $(u, v)\in \mathbb{H}^2$, recalling \eqref{MM1} and \eqref{cwa}, an integration by parts yields
\bea
&& (\mathcal{M}(u,v), (w,w_\Gamma))_{(\mathbb{H}_0^1)^*, \mathbb{H}_0^1 }\non\\
&=& \int_\Omega (\nabla u\cdot \nabla w + \widehat{f}(u)w) dx
+ \int_\Gamma (\nabla_\Gamma v\cdot \nabla_\Gamma w_\Gamma + \widehat{g}(v) w_\Gamma) dS\non\\
&=& \int_\Omega (-\Delta u + \widehat{f}(u))w dx + \int_\Gamma (-\Delta_\Gamma v+ \partial_\nu u +\widehat{g}(v)) w_\Gamma dS,\non
\eea
which easily implies
\be
\|\mathcal{M}(u,v)\|_{(\mathbb{H}_0^1)^*}\leq C(\|\mathrm{P}_0(-\Delta u + \widehat{f}(u))\|_{V^*}
+\|-\Delta_\Gamma v+ \partial_\nu u +\widehat{g}(v)\|_{H_\Gamma}).\label{LSA}
\ee
Recall that we are now dealing with the weak solution such that $(\chi, \xi)\in \mathbb{H}^3\subset \mathbb{H}^2$. Then we have, for $t\geq t_0$,
\bea
&& C(\|\mathrm{P}_0(-\Delta \tch (t)+ \widehat{f}(\tch(t)))\|_{V^*}+\|-\Delta_\Gamma \txi (t)+ \partial_\nu \tch (t)+\widehat{g}(\txi(t))\|_{H_\Gamma})\non\\
&\geq& |\Upsilon(\tilde{\chi}(t),\txi(t))-\Upsilon_\infty|^{1-\rho}.\label{lsaa}
\eea
We now integrate \eqref{dHH} over the interval $[t, +\infty)$, with $t\geq t_0$, obtaining
\be
\int_t^{+\infty}\mathcal{D}(s) ds = \mathcal{H}(t)-\mathcal{E}_\infty +Ce^{-2t}.\label{DDD}
\ee
On the other hand, using the {\L}ojasiewicz-Simon inequality \eqref{LSA}, the uniform estimates
\eqref{eesY} and the fact $\frac{1}{1-\rho}<2$, we deduce that, for all $t\geq t_0$,
\bea
&&|\mathcal{H}(t)-\mathcal{E}_\infty|\non\\
&\leq &  |\Upsilon(\tch(t), \txi(t))-\Upsilon_\infty|+\frac12(\|\tth(t)\|^2+\|\mathbf{q}(t)\|^2+\|\tch_t(t)\|_{V^*}^2) \non\\
 && +\kappa_1 \left|\int_\Omega \mathbf{q}(t)\cdot \nabla A_0^{-1}\tth (t)dx\right|+\kappa_2 |\mathcal{G}(t)|\non\\
 &\leq& C(\|\mathrm{P}_0(-\Delta \tch (t)+ \widehat{f}(\tch(t)))\|_{V^*}+\|-\Delta_\Gamma \txi (t)+ \partial_\nu \tch (t)
 +\widehat{g}(\txi(t))\|_{H_\Gamma})^\frac{1}{1-\rho}\non\\
 && +C(\|\tth(t)\|^2+\|\mathbf{q}(t)\|^2+\|\tch_t(t)\|_{V^*}^2+\|\mathrm{P}_0(-\Delta \tch (t)+ \widehat{f}(\tch(t)))\|_{V^*}^2)\non\\
 &\leq& C\|\tth(t)\|^\frac{1}{1-\rho}+\|\mathbf{q}(t)\|^\frac{1}{1-\rho}+C\|\tch_t(t)\|_{V^*}^\frac{1}{1-\rho}\non\\
 &&+C\|-\Delta_\Gamma \txi (t)+\partial_\nu \tch (t)+\widehat{g}(\txi(t))\|_{H_\Gamma}^\frac{1}{1-\rho}
 + C \|\mathrm{P}_0(-\Delta \tch(t) + \widehat{f}(\tch(t)))\|_{V^*}^\frac{1}{1-\rho}.\non
\eea
Using the dynamic boundary condition, we see that (cf. \eqref{disa1} and \eqref{k2bis})
\bea
&& \|-\Delta_\Gamma \txi(t) + \partial_\nu \tch(t) +\widehat{g}(\txi(t))\|_{H_\Gamma}\non\\
&\leq &\|\txi_t(t)\|_{H_\Gamma}+\|\widehat{g}(\txi(t))-g(\xi(t))\|_{H_\Gamma}+|Q_1(t)|\non\\
&\leq& \|\txi_t(t)\|_{H_\Gamma}+ C|Q_1(t)|+|Q_1(t)|\non\\
&\leq& \|\txi_t(t)\|_{H_\Gamma}+Ce^{-t}.\non
\eea
As a consequence, we find
\bea
|\mathcal{H}(t)-\mathcal{E}_\infty|
&\leq& C\|\tth(t)\|^\frac{1}{1-\rho}+\|\mathbf{q}(t)\|^\frac{1}{1-\rho}+C\|\tch_t(t)\|_{V^*}^\frac{1}{1-\rho}\non\\
 &&+C\|\txi_t(t)\|_{H_\Gamma}^\frac{1}{1-\rho}+C\|\mathrm{P}_0(-\Delta \tch(t) + \widehat{f}(\tch(t)))\|_{V^*}^\frac{1}{1-\rho}+Ce^{-\frac{1}{1-\rho}t}\non\\
 &\leq& C (\mathcal{D}(t))^\frac{1}{2(1-\rho)}.\label{HDd}
\eea
It follows from \eqref{DDD} and \eqref{HDd} that
\be
\int_t^{+\infty} \mathcal{D}(s)ds\leq C (\mathcal{D}(t))^\frac{1}{2(1-\rho)}, \quad \forall\, t\geq t_0.
\ee
Then, applying the abstract result \cite[Lemma 7.1]{FS} (see also \cite[Lemma 4.1]{HT01}), we infer
\be
\int_{t_0}^{+\infty} \sqrt{\mathcal{D}(t)} dt<+\infty,\non
\ee
which implies
\be
\int_{t_0}^{+\infty} (\alpha^\frac12\|\tch_t(t)\|+\|\txi_t(t)\|_{H_\Gamma}) dt<+\infty.\non
\ee
Thus, from the definition of $\tch, \txi$, we have
\be
\int_{t_0}^{+\infty} (\alpha^\frac12\|\chi_t(t)\|+\|\xi_t(t)\|_{H_\Gamma}) dt<+\infty.\non
\ee
This entails the convergence of $(\chi(t), \xi(t))$ in $\mathbb{H}$. Due to the uniform estimate in $\mathbb{Y}$ (cf. \eqref{eesY})
and the compact embedding, we see that there exists a steady state $(\chi_\infty, \xi_\infty)$  such that
\be
\lim_{t\to+\infty} \|(\chi(t), \xi(t)-(\chi_\infty, \xi_\infty)\|_{\mathbb{H}^r}=0, \quad 1\leq r<3.\non
\ee
In summary, we have proved the conclusion of Theorem \ref{conv}.

Using the energy differential inequality \eqref{dHH}, the argument developed in \cite{HJ01} (cf. also \cite{WGZ1, WGZ2})
and the energy method, one can proceed to show the estimate of decay rate. The details are left to the interested readers.
More precisely, the following result can be proven.
\bc
Let the assumption of Theorem \ref{conv} be satisfied. Then we have
\be
\|(\theta, \mathbf{q}, \chi, \xi, \chi)(t)-(\theta_\infty, \mathbf{0}, \chi_\infty, \xi_\infty, 0)\|_{\mathbb{X}}\leq C(1+t)^{-\frac{\rho}{1-2\rho}},\non
\ee
for all $t\geq 0$, where $C$ is a constant depending on the $\mathbb{X}$-norm of the initial datum and on the coefficients of the system,
while $\rho\in (0, \frac12)$ may depend on $(\chi_\infty, \xi_\infty)$.
\ec

\medskip

{\bf Acknowledgments.}  Cecilia Cavaterra was partially supported by by the FP7-IDEAS-ERC-StG Grant \#256872 (EntroPhase). Maurizio Grasselli gratefully acknowledges the support from Shanghai Key Laboratory for Contemporary Mathematics of Fudan University through the Senior Visiting Scholarship.
Hao Wu was partially supported by ``Chen Guang" project supported by Shanghai Municipal
Education Commission and Shanghai Education Development Foundation, National Science Foundation of China 11371098 and Shanghai Center for Mathematical Science.


\end{document}